\documentclass[a4paper]{article}

\pagestyle{plain} 

\usepackage{times}

\usepackage{amssymb}
\usepackage{amsmath}  
\usepackage{amsthm} 
\usepackage{enumerate}  
\usepackage{latexsym}
\usepackage{a4wide}


\usepackage{tikz-cd}
\usetikzlibrary{shapes,arrows,decorations.markings,positioning}

\newtheorem{theorem}{THEOREM}[section]
\newtheorem{lemma}[theorem]{LEMMA}
\newtheorem{corollary}[theorem]{COROLLARY}

\newtheorem{proposition}[theorem]{PROPOSITION}

\newtheorem{problem}[theorem]{PROBLEM}
\newtheorem{example}[theorem]{EXAMPLE}
\newtheorem{defin}[theorem]{DEFINITION}

\newenvironment{definition}{\begin{defin}\rm}{\end{defin}}

\makeatletter
\newcommand*\rel@kern[1]{\kern#1\dimexpr\macc@kerna}
\newcommand*\widebar[1]{%
  \begingroup
  \def\mathaccent##1##2{%
    \rel@kern{0.8}%
    \overline{\rel@kern{-0.8}\macc@nucleus\rel@kern{0.2}}%
    \rel@kern{-0.2}%
  }%
  \macc@depth\@ne
  \let\math@bgroup\@empty \let\math@egroup\macc@set@skewchar
  \mathsurround\z@ \frozen@everymath{\mathgroup\macc@group\relax}%
  \macc@set@skewchar\relax
  \let\mathaccentV\macc@nested@a
  \macc@nested@a\relax111{#1}%
  \endgroup
}
\makeatother


\def\b #1{\bar{#1}}
\def\o #1{\widebar{#1}} 
\def\wh #1{\widehat{#1}}

%
%

\def\Claim#1{\par\medskip\noindent{\bf Claim #1. }}
\def\pfclaim{\par\noindent{\bf Proof of claim. }}

\def\dom{\mathop{\rm dom}}

\def\Vec#1#2{#1_0,\ldots{},#1_{#2}}

\def\c #1{{\cal #1}}
\def \gc#1{\mathfrak{#1}}

\def\str{structure}
\def\uf{ultrafilter}

\def\upwr{ultrapower}
\def\ba{boolean algebra}
\def\fo{first-order}

\def\:={\buildrel \rm def \over =}

\def\bo{\Box}
\def\di{\lozenge}
\def\bbo{\blacksquare}
\def\bdi{\blacklozenge}

\def\To{\Rightarrow}
\def\inn{\mathrel{\underline{\in}}}

\def\M{\mathsf{M}}
\def\Q{\mathsf{Q}}
\def\ci{{^\circ}}
\def\u #1{\underline{#1}}
\def\pdt{pairwise distinct}

\def\nice{{resolved}}
\def\ach{achronal}

\title{Canonicity in power and modal logics of finite \ach\ width}
\author{Robert Goldblatt\thanks{School of Mathematics and Statistics,
Victoria University, Wellington, New Zealand. 
sms.vuw.ac.nz/$\sim$rob}\; 
and Ian Hodkinson\thanks{Department of Computing, Imperial College London, UK\null.
www.doc.ic.ac.uk/$\sim$imh/}}


\begin{document}
\maketitle

\begin{abstract}

We develop a method for showing that various modal logics that are valid in their countably generated canonical Kripke frames  must also be valid in  their uncountably generated ones. This is applied to many systems,  including the  logics of finite width, and a broader class of multimodal logics of `finite achronal width' that are introduced here.


\end{abstract}

\paragraph{Keywords.}
Canonical modal logic; finite width modal logic.

\noindent{\bf MSC2020 classification.}
Primary: 03B45. Secondary: 06E25.

\section{Introduction}

There is a fundamental question about the model theory of propositional modal logics that has remained open since the early days of the subject. It was first raised in \cite{Fine75} and asks: if a  logic  is valid in its countably generated canonical Kripke frames,  must it also be valid in  its uncountably generated ones?

To elaborate this, note that the points of a canonical frame are maximally consistent sets of formulas. If the ambient language has $\kappa$ atoms/variables, there will be at least $2^\kappa$ such points.
This ``$\kappa$-generated'' frame carries a particular model that verifies all (and only) the theorems of the logic, but in some cases it may also carry models that falsify some theorems. If a logic is \emph{valid} in the frame, i.e.\ verified by all models on the frame, we say it is \emph{canonical in power $\kappa$}, or $\kappa$-\emph{canonical}. A logic canonical in all powers will be called \emph{totally canonical}.

 It is known that a $\kappa$-canonical logic is $\lambda$-canonical for all $\lambda<\kappa$, and that there exist logics that are canonical in all finite powers but not $\omega$-canonical.
Our fundamental question is: if a logic is $\omega$-canonical, must it be canonical in all uncountable powers, and hence totally canonical?

To put it another way: because the collection of logics forms a set, rather than a proper class, it can be seen that there does exist some infinite cardinal $\kappa$ such that every $\kappa$-canonical logic is totally canonical. Our question asks whether the least such $\kappa$ is $\omega$, or something larger.

There have been some positive answers given for limited classes of logics. Fine \cite{Fine75} made the important discovery that a logic must be totally canonical if it is characterised by (i.e.\ sound and complete for validity in)  a class of frames that is first-order definable. He applied this in \cite{fine:logi85} to \emph{subframe logics}, those that are characterised by a class that is closed under subframes.
It was shown that a subframe logic whose validating frames are transitive is totally canonical iff the class of these frames is first-order definable. Moreover that holds iff  the logic is \emph{compact} (alias \emph{strongly complete}), which means that any consistent set of formulas is satisfiable in a model on a frame that validates the logic. Now any $\omega$-canonical logic is compact, so it follows that any  $\omega$-canonical transitive subframe logic is totally canonical.

Zakharyaschev \cite{zakh:cano96} generalised these results, showing that they hold for any logic characterised by a class of transitive frames that is closed under \emph{cofinal} subframes, and that there are continuum many ($2^{\aleph_0}$) such logics that are not subframe logics.
Wolter \cite{wolt:stru97} removed the transitivity restriction in Fine's analysis.
He also studied linear temporal logics in \cite{wolt:prop96}, showing that they are totally canonical iff characterised by a first-order definable class, but that this condition is now stronger than compactness. The linear temporal logic of the real numbers is compact but not totally canonical. Nonetheless we show here that every  $\omega$-canonical linear temporal logic  is totally canonical.

The present paper develops a technique for giving a positive answer to the fundamental question for various logics, by a kind of L\"owenheim--Skolem argument that reduces a `big' failure of canonicity to a countably generated one. We combine  Kripke modelling with algebraic semantics, under which logics $L$ correspond to varieties (equationally definable classes) $V_L$  of algebras. Corresponding to the canonical frame construction there is an operation assigning to any modal algebra $\c A$ a frame $\c A_+$ whose points are the ultrafilters of $\c A$. The algebra $\c A^\sigma$ of all subsets of $\c A_+$ is the \emph{canonical extension} of $\c A$. When $\c A$ is the Lindenbaum algebra of $L$, relative to a language with $\kappa$ atoms, then $\c A_+$ is isomorphic to the $\kappa$-generated canonical frame of $L$, and $L$ is $\kappa$-canonical iff $\c A^\sigma\in V_L$.
 Moreover the Lindenbaum algebra is free in $V_L$ on $\kappa$ generators, and from this one can show that $L$ is $\kappa$-canonical iff $\c A^\sigma\in V_L$ for every $\c A\in V_L$ that is generated by a set of cardinality at most $\kappa$. We find it convenient to work with this algebraic formulation of canonicity \cite[\S 3.5]{gold:vari89}.

Let us say that a logic or variety is \emph{\nice} if it is either totally canonical, or not 
$\omega$-canonical (in which case it is not canonical in any infinite power). The fundamental question asks whether every logic is \nice.
We illustrate our strategy for tackling this by applying it to logics that contain the axiom $\di\di p\to\bo\di p$, which we call 5$_2$ (it is a weakening of the 5-axiom from the system S5). To explain the semantic meaning of 5$_2$ we take a temporal view of the binary relation $R$ of a frame and think of a set of the form $R(x)=\{z:Rxz\}$ as the \emph{future}  of point $x$. Then a frame validates 5$_2$ iff for each $x$, all points in $R(x)$ have the same future, i.e.\ $R(y)=R(y')$ for all  $y,y'\in R(x)$.
 
We show that any logic containing $5_2$ is \nice, and that there is a continuum of such logics. These results are then substantially generalised by introducing a new family 
$\{U_n:n<\omega\}$ of axioms that generalise the `finite width' axioms 
$\{I_n:n<\omega\}$ from \cite{+Fine74}. The relational condition for validity of $I_n$ is that no future set $R(x)$ contains an antichain with more than $n$ points. Here an antichain is a set $S$ such that $Ryz$ fails for all distinct $y,z\in S$, i.e.\ no member of $S$ is in the future of any other member. A finite width logic is one that includes $K4I_n$, the smallest normal logic to contain the transitivity axiom 4 and the axiom $I_n$, for some $n$.

$U_n$ is the formula
$$
\bigvee_{i\leq n}\bo\big(\bo q_i\to\bigvee\nolimits_{j\leq n,\,j\neq i}\bo q_j\big),
$$
which is derivable from 4 and $I_n$.
Its relational meaning is obtained by replacing antichains by the notion of an \emph{achronal} set $S$, one for which  $R(y)\not\subseteq R(z)$ for all distinct $y,z\in S$, i.e.\ no two points of $S$ have $\subseteq$-comparable futures. The condition for validity of $U_n$ is that no set  $R(x)$ contains an achronal set with more than $n$ points.

This discussion has assumed we are dealing with a single modality $\di$ and its dual $\bo$. In fact we allow ourselves a multimodal language with a set $\M$ of diamond modalities $\di,\bdi,\dots$ having duals $\bo,\bbo,\dots$, and take $U_n^\M$ to be the set of formulas
$$
\bigvee_{i\leq n}\bo\big(\bbo q_i\to\bigvee\nolimits_{j\leq n,\,j\neq i}\bbo q_j\big)
$$
for all $\di,\bdi\in\M$. We prove that any logic in this language that includes $U_n^\M$ is \nice.

In  the monomodal case $\M=\{\di\}$, this result includes all the finite width logics, but it covers much more. We show that there is a continuum of logics that include  $K4U_n$ and are included in $K4I_n$, and a continuum that include 
   $KU_{n+1}$ and are included in $KU_n$, for each $n$ (as well as other results in this vein).
 
 Fine showed in \cite{+Fine74} that any logic containing one of the finite width logics $K4I_n$ is \emph{Kripke complete}: it is complete for validity in its Kripke frames. This situation does not extend to the $KU_n$'s, or even the $K4U_n$'s, as we show by constructing an extension of $K4U_2$ that is  Kripke incomplete.

\subsection{Layout of paper}

\S\ref{sec:defs} recalls some relevant definitions
and sets out canonicity in power.
\S\ref{sec:setup} develops a strategy for proving
that certain modal logics (or varieties of BAOs) are \nice.
\S\ref{sec: K5}~applies this strategy to logics extending $K5_2$,
and shows that such extensions are numerous.

We take a break from canonicity
in sections~\ref{sec:KUn} and~\ref{sec:kr comp},
which can be read largely independently of the rest of the paper.
Motivated by the work on $K5_2$,
in \S\ref{sec:KUn} we introduce the modal logics $KU_n$,
for finite $n\geq1$, and compare them to some other logics.
Extensions of the $KU_n$ are called \emph{logics of finite \ach\ width,}
and the remainder of the paper studies them.
In \S\ref{sec:kr comp} we show that some of them are Kripke incomplete.
We return to canonicity in \S\ref{sec:main canon},
where we apply the strategy to them, showing that they are all \nice.

\section{Basic definitions}\label{sec:defs}

We start by recalling some standard modal and algebraic notions,
and setting up some definitions and notation for later use.

\subsection{General machinery used in the paper}\label{ss:strs}

We work in ZFC set theory and use standard set-theoretic material,
including maps or functions, ordinals, and cardinals.
A cardinal is an (initial) ordinal.
We write the first infinite cardinal as both $\omega$ and $\aleph_0$;
\emph{countable} will mean of cardinality at most this cardinal.
The cardinality of a set $X$ is denoted by $|X|$,
and the power set of $X$ by $\wp(X)$.
We write $\dom(f)$ for the domain of a map $f$.

\begin{definition}\label{def:bin rels}
Let $R$ be a binary relation on a set $X$ (ie.\ $R\subseteq X\times X$), and let $Y\subseteq X$.
\begin{enumerate}

\item We may write any of $xRy$, $Rxy$, $R(x,y)$ to denote that $(x,y)\in R$.
Later, we will use the same convention for \fo\ formulas.

\item For $x\in X$, we write $R(x)=\{y\in X:Rxy\}$.

Thinking of temporal logic,
we may speak of $R(x)$ as the \emph{($R$-)future of $x$.}

\item We write
$R[Y]=\{R(y):y\in Y\}$.

\item $Y$ is said to be \emph{$R$-closed (in $X\!$)} if $\bigcup R[Y]\subseteq Y$.
That is, if  $y\in Y$, $x\in X$, and $Ryx$, then $x\in Y$.

\item $Y$ is said to be an \emph{$R$-antichain}
if $\neg Ryz$ for all \emph{distinct} $y,z\in Y$.

\item 
The \emph{$R$-width} of $Y$ 
is the least cardinal $\kappa$
such that every $R$-antichain $Z\subseteq Y$ has cardinality
$\leq\kappa$.  
\end{enumerate}
\end{definition}

\subsection{Multimodal logic}\label{ss:mml}

We fix a nonempty 
set $\M$ of unary diamond-modalities,
and a set $\Q=\{q_i:i<\omega\}$ of propositional atoms.
 \emph{Modal $\M$-formulas,} or simply \emph{modal formulas,} 
 are then defined as usual.
Each atom in $\Q$ is a formula, as is $\top$,
and if $\varphi,\psi$ are formulas,
so are $\neg\varphi$, $\varphi\vee\psi$,
and $\di\varphi$, for each $\di\in\M$.

We adopt standard conventions.
We let $p,q,r,\ldots$ stand for arbitrary distinct atoms in~$\Q$.
We let
$\bot$ abbreviate $\neg\top$;
$\varphi\wedge\psi$ abbreviate $\neg(\neg\varphi\vee\neg\psi)$;
$\varphi\to\psi$ abbreviate $\neg\varphi\vee\psi$;
 $\varphi\leftrightarrow\psi$ abbreviate $(\varphi\to\psi)\wedge(\psi\to\varphi)$;
 and $\bo\varphi$ abbreviate $\neg\di\neg\varphi$, for each $\di\in\M$.
We adopt the usual binding conventions,
whereby $\neg,\di,\wedge,\vee,\to,\leftrightarrow$ (for each $\di\in\M$) are in decreasing order of tightness.

A \emph{normal modal $\M$-logic}
is a set of $\M$-formulas
containing all propositional tautologies,
the axioms $\bo(p\to q)\to(\bo p\to\bo q)$
and $\di p\leftrightarrow\neg\bo\neg p$
for each $\di\in\M$,\footnote{The latter axiom is needed because we take diamonds as
primitive and boxes as abbreviations. 
The same is done in \cite{BRV:ml}, for similar reasons; see \cite[p.34]{BRV:ml} for a discussion.}
 and closed under the inference rules of
modus ponens (from $\varphi$ and $\varphi\to\psi$, infer $\psi$),
generalisation (from $\varphi$, infer $\bo\varphi$),
and substitution (from $\varphi$, infer any
substitution instance of it --- any formula obtained
from $\varphi$ by replacing its atoms by arbitrary formulas).

\subsection{Structures}

We assume some familiarity with basic \fo\ model theory.
For unfamiliar terms, see, e.g., \cite{ChK90,Hodg93}.
A \emph{signature} is a set of non-logical symbols
(function symbols, relation symbols, and constants)
 together with their types and arities.
Others call it a similarity type, alphabet, or vocabulary.
 Equality is regarded as a logical symbol and is always available.
We will be using various model-theoretic \str s,
some of which will be two-sorted.
We take it as read that standard model-theoretic results apply
to two-sorted \str s.
We identify (notationally) a \str~$\gc M$ with its domain $\dom(\gc M)$.
We write $S^\gc M$ for the interpretation of a symbol
or term $S$ in $\gc M$.
We write $\b a\in\gc M$ to indicate that $\b a$ is a tuple of elements of the domain of $\gc M$.
For a map $f$ defined on $\gc M$,
we write $f(\b a)$ for the tuple obtained from $\b a$ by applying $f$ to its elements in order.

\subsection{Kripke frames, models, semantics}\label{ss:dual str}

Kripke frames are formulated as \str s, as follows.
We extend this slightly to cover Kripke models too.

\begin{definition}\label{def:frame}
 Introduce a binary relation symbol $R_\di$
for each $\di\in\M$.
A \emph{(Kripke) frame} is a \str\ for this signature.
A \emph{subframe} of a frame $\c F$ is simply a sub\str\ of~$\c F$ in the usual 
model-theoretic sense.
An \emph{inner subframe} of $\c F$ is a subframe
whose domain is $R_\di^\c F$-closed in $\c F$ 
(definition~\ref{def:bin rels}) for every $\di\in\M$.

A \emph{valuation into a frame $\c F$}
is a map $g:\Q\to\wp(\c F)$.
(Recall that notationally we identify $\c F$ with its domain.)

A \emph{(Kripke) model} is a pair $\c M=(\c F,g)$, where $\c F$ is a frame
and $g$ a valuation into it.
The \emph{frame of $\c M$} is $\c F$.
We may regard $\c M$ as a \str\ for the signature of frames 
by identifying it with $\c F$ (for this purpose).
So, for example, we write $t,u\in\c M$ to mean that $t,u\in\c F$,
and $\c M\models R_\di tu$ to mean that $\c F\models R_\di tu$, for $\di\in\M$.

A~\emph{submodel} of $\c M$
is a Kripke model of the form $(\c F',g')$,
where $\c F'$ is a subframe of $\c F$ and
$g'$ is the valuation into it given by
$g'(q)=g(q)\cap\c F'$, for each $q\in\Q$.
A submodel is fully determined by its domain.

\end{definition}

We now recall the standard modal definitions of truth, validity, etc.
\begin{definition}
Let $\c M=(\c F,g)$ be a Kripke model.
We define $\c M,w\models\varphi$
(in words, `$\varphi$ is true in $\c M$ at $w$'), 
for each $w\in\c F$ and modal formula $\varphi$, by induction as usual:
$\c M,w\models\top$;
$\c M,w\models q$ iff $w\in g(q)$, for $q\in\Q$;
$\c M,w\models\neg\varphi$ iff $\c M,w\not\models\varphi$;
$\c M,w\models\varphi\vee\psi$ iff
$\c M,w\models\varphi$ or $\c M,w\models\psi$ or both;
and if $\di\in\M$, then
$\c M,w\models\di\varphi$ iff $\c M,u\models\varphi$ for some
$u\in\c M$ with $\c M\models R_\di wu$.

We say that a modal formula
$\varphi$ is
\emph{satisfied in a Kripke model $\c M$}
if $\c M,w\models\varphi$ for some $w\in\c M$.
We will say that $\c M$ \emph{verifies} $\varphi$
 (written $\c M\models\varphi$),
if $\c M,w\models\varphi$ for every $w\in\c M$,
and that $\c M$ \emph{strongly verifies} $\varphi$
if it verifies every substitution instance of $\varphi$.

Let $\c F$ be a Kripke frame, and $\c K$ a class of frames.
We say that $\varphi$ is
\emph{satisfiable in $\c F$,}
if  it is satisfied in some Kripke model with frame $\c F$;
\emph{valid at a point $w$ in $\c F$,}
if $\c M,w\models\varphi$ for every Kripke model $\c M$ with frame $\c F$;
 \emph{valid in $\c F$}
(written $\c F\models\varphi$),
if every model with frame $\c F$ verifies $\varphi$;
and \emph{valid in $\c K$}
(written $\c K\models\varphi$), if $\varphi$ is valid in every frame in $\c K$.
In the latter cases, we also say that \emph{$\c F$ or $\c K$ validates $\varphi$.}

We say that a set $S$ of modal formulas is \emph{valid in $\c F$,}
and write $\c F\models S$,
if each formula in $S$ is valid in $\c F$.
The other kinds of validity are extended to sets of formulas similarly.

The set of all formulas valid in $\c F$ or $\c K$
is a normal modal $\M$-logic, called the \emph{logic of
(or the logic determined by)~$\c F$ or $\c K$.}

\end{definition}

\subsection{Point-generated inner subframes}

\begin{definition}\label{def:etc}
 For an ordinal $\alpha$, we  put $^\alpha \M=\{s\mid s:\alpha\to \M\}$, the
set of functions from $\alpha$ into $\M$,
and $^{<\alpha}\M=\bigcup_{\beta<\alpha}{}^\beta \M$.
One can think of the elements of $^{<\omega}\M$
as the finite sequences of diamonds.

 For an ordinal $\alpha$, a map $s\in{}^{\alpha}\M$, and $\di\in\M$,
we write $s\hat\ \di\in{}^{\alpha+1}\M$ for the map
whose restriction to $\alpha$ is $s$ and whose value on $\alpha$ is $\di$.
Thinking of $s$ as a sequence, $s\hat\ \di$ is the same sequence with $\di$ appended.

For each $s\in{}^{<\omega}\M$,
we define the \fo\ formula 
$R_s xy$ of the signature of frames, by induction on the ordinal $\dom(s)$:
\begin{itemize}
\item $R_\emptyset xy$ is $x=y$, where $\emptyset$ is the empty map in $^0\M$,

\item $R_{s\hat\ \di}xy$ is $\exists z(R_s xz\wedge R_\di zy)$.
\end{itemize}

  Finally, let  $\c F$ be a frame and $w_0\in\c F$.
The \emph{inner subframe of $\c F$ generated by $w_0$}
is the subframe $\c F(w_0)$
of $\c F$ with domain 
\[
\{w\in\c F:\c F\models R_s w_0w\mbox{ for some }s\in{}^{<\omega}\M
\}.
\]
\end{definition}
Taking $s=\emptyset$ here, we see that $\c F(w_0)$ contains $w_0$.
It is the smallest inner subframe of $\c F$ to do so.

\subsection{$\M$-BAOs and translations}\label{ss:algs}

It is now well known that modal logic can be `algebraised' using 
\emph{\ba s with operators (BAOs).}
We will just give some definitions;
for more information, see, e.g., \cite[chapter 5]{BRV:ml}.
We will pass fairly freely between modal logic and algebra.

An \emph{$\M$-BAO}
is an algebra $\c A$
with signature $\{+,-,0,1\}\cup\M$, where 
the $\{+,-,0,1\}$-reduct of $\c A$ is a \ba\ and
each $\di\in\M$ is
a unary function symbol interpreted in $\c A$ as
a normal additive operator 
(that is, $\c A\models\di0=0$ and $\c A\models\di(a+b)=\di a+\di b$ for each $a,b\in\c A$).
This is as per \cite[definition 5.19]{BRV:ml}.
The double use of $\di$ for a modal operator and a function symbol
should not cause problems in practice.

Standardly, each modal $\M$-formula
$\varphi$ can be translated to a term $\tau_\varphi$ of this signature
(an `$\M$-BAO term').
Introduce pairwise distinct variables $v_i$ ($i<\omega$).
We let $\tau_\top=1$,
$\tau_{q_i}=v_i$ for $i<\omega$, 
$\tau_{\neg\varphi}=-\tau_\varphi$,
$\tau_{\varphi\vee\psi}=\tau_\varphi+\tau_\psi$,
and $\tau_{\di\varphi}=\di\tau_\varphi$, for $\di\in\M$.
We can now say that a class 
$\c C$ of $\M$-BAOs \emph{validates} a modal formula $\varphi$
if $\c C\models\forall\b v(\tau_\varphi=1)$,
where $\b v$ is the tuple of variables occurring in $\tau_\varphi$.
As usual, $\c C$ validates a set of  modal formulas
if it validates every formula in the set.

A \emph{variety of $\M$-BAOs} is a
 class $V$ of $\M$-BAOs defined by a set of equations.
The set
$L_V$ of modal $\M$-formulas validated by $V$
is then a normal modal $\M$-logic as defined in \S\ref{ss:mml}.
If $L$ is a normal modal $\M$-logic,
$V_L$ denotes the variety of all $\M$-BAOs defined
by the set $\{\tau_\varphi=1:\varphi\in L\}$ of equations.
We have $V_{L_V}=V$ and $L_{V_L}=L$.
See, e.g., \cite[\S5.2]{BRV:ml} for more.

Conversely, each $\M$-BAO term $\tau$
written with the variables $v_i$
can be translated to a modal $\M$-formula $\varphi_\tau$.
We let $\varphi_1=\top$,
$\varphi_0=\bot$,
$\varphi_{v_i}=q_i$,
$\varphi_{-\tau}=\neg\varphi_\tau$,
$\varphi_{\tau+\upsilon}=\varphi_\tau\vee\varphi_{\upsilon}$,
and $\varphi_{\di\tau}=\di\varphi_\tau$ for $\di\in\M$.
We extend this translation to equations:
given $\M$-BAO terms $\tau,\upsilon$,
we let  $\varphi_{\tau=\upsilon}$ be the
modal formula
$\varphi_\tau\leftrightarrow\varphi_\upsilon$.

\subsection{Canonical extensions}

For an $\M$-BAO $\c A$, we can form, in the usual way,
\begin{itemize}
\item  its \emph{canonical frame} $\c A_+$,
whose domain comprises the \uf s of $\c A$,
and which is regarded as a frame in the  sense of 
definition~\ref{def:frame},
with interpretations given as usual
by $\c A_+\models R_\di\mu\nu$ iff $a\in\nu\To \di a\in\mu$ for all $a\in\c A$,
where $\mu,\nu$ are any \uf s of $\c A$ and $\di\in\M$,

\item its \emph{canonical extension} $\c A^\sigma$ (which is also an $\M$-BAO).
It is also known as
a `perfect extension' or  the `canonical embedding algebra',
and written $\gc{Em}\c A$.

\end{itemize}
For details,  see, e.g., \cite{JT51} or \cite[definition 5.40]{BRV:ml}.

\subsection{Canonicity in power}\label{ss:canonicity}

\begin{definition}\label{def:main canon}
We say that a variety $V$ of  $\M$-BAOs is \emph{canonical,} or 
for emphasis, \emph{totally canonical,} if $\c A^\sigma\in V$
for every $\c A\in V$.
For a cardinal~$\kappa$,
we say that $V$ is \emph{$\kappa$-canonical}
if $\c A^\sigma\in V$
for every $\c A\in V$ that is `$\kappa$-generated' --- that is,
generated as an algebra
by a subset of $\c A$ of cardinality $\leq\kappa$.

We say that $V$ is
\emph{\nice}
 if it is either totally canonical, or not even $(|\M|+\omega)$-canonical.
\end{definition}
This definition of $\kappa$-canonicity,
or canonicity in the power $\kappa$,
 is a slight variant of ones in
\cite[p.30]{Fine75} and \cite[\S6]{Gol95:canon}.
The connection of these notions to 
canonicity of modal logics is again well known  \cite[\S5.3]{BRV:ml}:
$V$ is canonical iff $L_V$ is canonical --- ie.\
every canonical frame of $L_V$ validates $L_V$ ---
and $V$ is $\kappa$-canonical iff every canonical frame
of $L_V$ defined with $\leq\kappa$ atoms validates $L_V$.

%
%

Let us consider \nice ness in the case $\M=\{\di\}$,
where $|\M|+\omega=\omega$.
 \cite[theorem 6.1]{Gol95:canon}
 gives an example of a variety
 of $\{\di\}$-BAOs
that is finitely canonical ($n$-canonical for all finite $n$)
but not $\omega$-canonical.
Nonetheless,
as far as we are aware,
all known $\omega$-canonical varieties of $\{\di\}$-BAOs are totally canonical.
\emph{No non\nice\ varieties of $\{\di\}$-BAOs are known.}
 Fine \cite[p.30]{Fine75} asked if there are any (his question was for modal logics
 and we have read it across for varieties).
 This is asking whether 
 there is a dichotomy between totally canonical and non-$\omega$-canonical
 varieties of $\{\di\}$-BAOs, with no other kinds existing.

Let us unpack this a little more.
For arbitrary $\M$ now,
let $E$ be the set of all $\M$-BAO equations,
and let $\Xi=\wp(E)$.
Varieties are defined by members of $\Xi$.
A subset $\Sigma\subseteq \Xi$
therefore defines a particular kind of variety:
a \emph{$\Sigma$-variety} is a variety of $\M$-BAOs
defined by a set of equations in $\Sigma$.
Since $\Sigma$ is a set, the ZFC axiom of replacement ensures that
there exists a cardinal $\kappa$
such that every non-canonical $\Sigma$-variety $V$
contains a $\kappa$-generated algebra $\c A$ with $\c A^\sigma\notin V$.
Hence, every $\kappa$-canonical $\Sigma$-variety
 is totally canonical.
 Let $\kappa_\Sigma$ be the least such $\kappa$.
 Then
 $\Sigma\subseteq\Sigma'\subseteq\Xi$ implies $\kappa_\Sigma\leq\kappa_{\Sigma'}$;
if  every $\Sigma$-variety is canonical then $\kappa_\Sigma=0$;
 and every $\Sigma$-variety is \nice\ iff $\kappa_\Sigma\leq|\M|+\omega$.
 
 As far as we are aware, all known varieties
 of $\M$-BAOs are \nice, and it is an open question
 whether \emph{every} variety of $\M$-BAOs is \nice\ ---
 that is, $\kappa_\Xi\leq|\M|+\omega$.
In the absence of an answer, we can still try to identify 
 large sets $\Sigma$ for which every $\Sigma$-variety is \nice.
That is what we will do in the current paper.

\section{Strategy for canonicity}\label{sec:setup}

In this section (in \S\ref{ss:strategy} and theorem~\ref{thm:truthlemma} especially)
we present a model-theoretic strategy for proving that 
a variety of $\M$-BAOs is \nice.
Along the way, we accumulate some other results
(such as  corollaries~\ref{cor:A case} and~\ref{cor:f emb})
that will be useful later.

\subsection{The variety $V$}\label{ss:var V}

We now fix a variety $V$ of  $\M$-BAOs. 
Further conditions on $V$ will be imposed later.

\subsection{The two-sorted \str\ $\gc B$}\label{ss:B}

Next we fix a `big' two-sorted \str\
$$
\gc B=(\c B,\c B_+),
$$
where $\c B\in V$.
We say that $\c B$ comprises the elements of $\gc B$ of \emph{algebra sort},
and $\c B_+$ comprises the elements of \emph{point sort.}
Further conditions on $\gc B$ will be imposed later.
It may be confusing but has to be accepted
that each \uf\ of $\c B$ is both  a {subset} of $\c B$
and an {element} of $\c B_+$.

The two-sorted signature $\c L$ of $\gc B$
comprises the following symbols with the following interpretations in $\gc B$:
\begin{enumerate}
\item The algebra operations $+,-,0,1,\di$ of $\c B$ for each $\di\in\M$, all taking algebra
elements to algebra elements. Their interpretation in $\gc B$ is 
copied from $\c B$.
As abbreviations, we write $x\cdot y=-(-x+-y)$
and $\bo x=-\di{-}x$ for each $\di\in\M$.

\item A binary relation symbol ${\inn}:\c B\times\c B_+$, with $\gc B\models b\inn \mu$ iff $b$ 
is a member of the \uf~$\mu$.

\item The binary relation symbols
$R_\di:\c B_+\times\c B_+$ (for each $\di\in \M$)
 introduced for frames in definition~\ref{def:frame},
with interpretations copied from the
canonical frame $\c B_+$.
So for each $\di\in\M$,
\[
\gc B\models\forall xy(R_\di xy\leftrightarrow\forall z(z\inn y\to\di z\inn x)).
\]
Note that formulas of the signature of frames,
such as $R_sxy$ 
($s\in{}^{<\omega}\M$) from definition~\ref{def:etc},
 are also  $\c L$-formulas.

\item Unary relation symbols $Q_i$ ($i<\omega$) of point sort.
They can be forgotten about until \S\ref{ss:strategy}.
For now, bear in mind that
they are indeed in $\c L$ and get interpretations in $\c L$-\str s.

\end{enumerate}
We leave the sorts of variables and \str\ elements to be determined by context.

\subsection{Countable $\M$}\label{ss:M ctble}

From now on, \emph{we assume that $\M$ is countable.}
This is the most common case encountered in practice,
although there are some exceptions, 
such as the signatures of some infinite-dimension\-al
cylindric and polyadic algebras.
For countable $\M$, \emph{saying that a variety is \nice\ 
says that it is canonical iff it is $\omega$-canonical.}

We make this countability restriction purely for simplicity of exposition.
Suitably formulated, our results readily generalise to uncountable $\M$:
see \S\ref{ss:final rmks}.

\subsection{The \str\ $\gc A\preceq\gc B$}\label{ss:A<B}
We fix a countable elementary sub\str\
\[
\gc A=(\c A,W)\preceq\gc B.
\]
Such a \str\ exists because $\M$, and hence $\c L$, are countable
(and this is the only time we use the countability of $\M$).
We write $\c L(\gc A)$ for $\c L$ expanded by a new constant $\u a$
for each element $a$ of $\gc A$. 
From now on, $\gc{A,B}$ will denote
 the expansions of these respective \str s to $\c L(\gc A)$-\str s in which
each $\u a$ names $a$.
Note that these expansions continue to satisfy $\gc A\preceq\gc B$.

Easily, $\gc A\preceq\gc B$ implies that 
$\c A\preceq\c B$ as algebras, so  $\c A$ is an $\M$-BAO  and $\c A\in V$
(since $V$ is an elementary class).
Although $\dom\gc A\subseteq\dom \gc B$, for clarity
we will sometimes use the inclusion map
\[
\iota:\dom\gc A\to\dom \gc B.
\]
It is an $\c L(\gc A)$-elementary embedding.

To save time, we will use the easy half of 
the \emph{Tarski--Vaught criterion}
(see \cite[2.5.1]{Hodg93} or \cite[3.1.2]{ChK90}): since
 $\gc A\preceq\gc B$,
if $\varphi(x)$ is an $\c L(\gc A)$-formula 
and $\gc B\models\exists x\varphi(x)$,
then there is $a\in\gc A$ with $\gc B\models\varphi(a)$.


\subsection{The $\c L'$-expansions $\gc{A',B'}$}

Since $\gc A$ and $\gc B$ are elementarily equivalent $\c L(\gc A)$-\str s,
by Frayne's theorem \cite[4.3.13]{ChK90}
there exist an \upwr\ $\gc A^*$
and an $\c L(\gc A)$-elementary embedding
$\sigma:\gc B\to\gc A^*$.
(The \upwr\ $\gc M^*$, using the same \uf, is defined for every \str\ $\gc M$.)
Let $\delta:\gc A\to\gc A^*$ be the diagonal embedding
(called the `natural embedding' in \cite[p.221]{ChK90}).
Because each element of $\gc A$ is named by a constant of $\c L(\gc A)$, we have
$\delta=\sigma\circ\iota$:
see figure~\ref{fig1}.

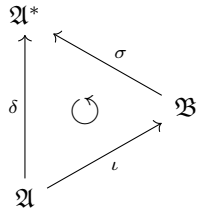
\begin{figure}[ht]
\begin{center}
\begin{tikzcd}[column sep=41pt, row sep = 20pt]

\gc A^*
\arrow[dd, phantom, bend left=55,  "\circlearrowleft" scale=1.5]

\\

&\gc B
\arrow[lu, "\sigma"']

\\

\gc A
\arrow[uu,"\delta"]
\arrow[ur,"\iota"']

\end{tikzcd}
\end{center}
\vskip-12pt

\caption{the $\c L(\gc A)$-elementary maps $\delta,\iota,\sigma$.
The diagram commutes.}\label{fig1}
\end{figure}

\begin{definition}

\begin{enumerate}
\item Let $\c L'$ denote the expansion of $\c L(\gc A)$
by a unary relation symbol $\u\mu$ of algebra sort 
for each \uf\ $\mu\in\c A_+$.

\item 
Expand $\gc A$ to an $\c L'$-\str\ $\gc A'$,
by interpreting each $\u\mu$ as
$\mu$.
In other words, for each $a\in\c A$ we put
$\gc A'\models\u\mu(a)$ iff $a\in\mu$.

\item $\gc A'^*$ denotes the $^*$-\upwr\
 $(\gc A')^*$ of $\gc A'$.
 It is an $\c L'$-expansion of $\gc A^*$.

\item 
Expand $\gc B$ to an $\c L'$-\str\ $\gc B'$,
by setting
$\gc B'\models\u\mu(b)$ iff $\gc A'^*\models\u\mu(\sigma(b))$,
for each $b\in\c B$ and $\mu\in\c A_+$.



\end{enumerate}

\end{definition}
Of course,  $\delta:\gc A'\to\gc A'^*$
is still the diagonal embedding,
so is $\c L'$-elementary.
The maps $\iota:\gc A'\to\gc B'$ and $\sigma:\gc B'\to\gc A'^*$
are $\c L'$-embeddings, but
as example~\ref{eg:conv fails} below shows,
they are not always $\c L'$-elementary.

\subsection{Simple formulas}

\begin{definition}\label{def:simple}
An $\c L'$-formula
is said to be \emph{simple} if 
it is a boolean combination of
$\c L(\gc A)$-formulas and atomic $\c L'$-formulas of the form $\u\mu(\tau)$,
where  $\mu\in\c A_+$ and $\tau$ is an $\c L(\gc A)$-term (equivalently, an $\c L'$-term)
of algebra sort.
\end{definition}

The following is a weak (but still useful) preservation result.

\begin{proposition}\label{prop:better still}
\begin{enumerate}
\item For each simple $\c L'$-formula $\theta(\b x)$
and each tuple $\b b$ of elements of $\gc B$ whose sorts match those of $\b x$,
we have
 $\gc B'\models\theta(\b b)$ iff $\gc A'^*\models\theta(\sigma(\b b))$.

\item For each simple $\c L'$-formula $\theta(\b x,\b y)$,
if $\gc A'\models\exists\b x\forall\b y\theta$
then $\gc B'\models\exists\b x\forall\b y\theta$.
\end{enumerate}

\end{proposition}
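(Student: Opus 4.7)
The plan is to handle the two parts in turn, with part~1 established by induction on the construction of the simple formula and part~2 deduced from it by a clever use of the commuting triangle $\delta = \sigma\circ\iota$.

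For part~1, I would induct on the boolean structure of $\theta$. Since the boolean connectives are preserved in both directions trivially, the real content is in the two kinds of atoms permitted by definition~\ref{def:simple}: an arbitrary $\c L(\gc A)$-formula $\varphi(\b x)$, or an atomic formula $\u\mu(\tau(\b x))$ with $\mu\in\c A_+$ and $\tau$ an $\c L(\gc A)$-term of algebra sort. For the first, since $\sigma:\gc B\to\gc A^*$ is $\c L(\gc A)$-elementary, and since $\gc B'$ and $\gc A'^*$ are expansions of $\gc B$ and $\gc A^*$ respectively by the new predicates $\u\mu$ alone (which do not appear in $\varphi$), we have $\gc B'\models\varphi(\b b)$ iff $\gc B\models\varphi(\b b)$ iff $\gc A^*\models\varphi(\sigma(\b b))$ iff $\gc A'^*\models\varphi(\sigma(\b b))$. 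For the second, $\sigma$ is in particular an algebra homomorphism on the algebra sort, so $\sigma(\tau^{\gc B}(\b b))=\tau^{\gc A^*}(\sigma(\b b))$; writing $c=\tau^{\gc B}(\b b)\in\c B$ and invoking the definition of $\gc B'$, we get $\gc B'\models\u\mu(c)$ iff $\gc A'^*\models\u\mu(\sigma(c))$, which is precisely the required equivalence.

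For part~2, suppose $\gc A'\models\exists\b x\forall\b y\,\theta(\b x,\b y)$ and pick witnesses $\b a$ from $\gc A$ with $\gc A'\models\forall\b y\,\theta(\b a,\b y)$. I claim the same $\b a$, viewed as elements of $\gc B$ via $\iota$, witness the existential in $\gc B'$. Since $\delta:\gc A'\to\gc A'^*$ is $\c L'$-elementary, $\gc A'^*\models\forall\b y\,\theta(\delta(\b a),\b y)$. But $\delta=\sigma\circ\iota$ (figure~\ref{fig1}), so $\delta(\b a)=\sigma(\iota(\b a))$, which, under our notational identification, is $\sigma(\b a)$. Given any tuple $\b b$ from $\gc B$ of matching sorts to $\b y$, we then have $\gc A'^*\models\theta(\sigma(\b a),\sigma(\b b))$, and by part~1 applied to the simple formula $\theta$ and the combined tuple $(\b a,\b b)$, this is equivalent to $\gc B'\models\theta(\b a,\b b)$. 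As $\b b$ was arbitrary, $\gc B'\models\forall\b y\,\theta(\b a,\b y)$, hence $\gc B'\models\exists\b x\forall\b y\,\theta$.

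The main thing to watch is simply bookkeeping: keeping track of which signatures each map is elementary for (only $\delta$ is $\c L'$-elementary; $\sigma$ and $\iota$ are only $\c L(\gc A)$-elementary, but preserve atomic $\c L'$-formulas of the special form $\u\mu(\tau)$ by construction of $\gc B'$), and handling the two sorts correctly. There is no genuine obstacle — the design of $\gc B'$ makes exactly the atomic $\u\mu$-statements transfer across $\sigma$, which is all that simple formulas can see, and the $\exists\forall$ shape of part~2 is precisely tailored so that the existential witness can be extracted from the small \str\ $\gc A$ (where $\delta$'s full $\c L'$-elementarity is available) and the universal can be verified one instance at a time using part~1.
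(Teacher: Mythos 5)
Your proof is correct and follows essentially the same route as the paper: part~1 by induction on the simple formula, with the atomic case $\u\mu(\tau(\b x))$ handled via the definition of $\gc B'$ and the fact that $\sigma$ preserves terms, and part~2 by extracting a witness in $\gc A$, pushing it into $\gc A'^*$ via the $\c L'$-elementary map $\delta=\sigma\circ\iota$, and verifying each universal instance with part~1. The only cosmetic difference is that the paper first replaces the witness tuple by its naming constants to reduce to the sentence case, whereas you carry the tuple through directly; both are fine.
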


\begin{proof}
(1) is proved by induction on the \str\ of $\theta$ as a simple formula.
If $\theta$ is an $\c L(\gc A)$-formula, the result holds because $\sigma$ is 
$\c L(\gc A)$-elementary.
Suppose that $\theta=\u\mu(\tau(\b x))$,
where $\mu\in\c A_+$ and
$\tau(\b x)$ is an algebra-sorted $\c L(\gc A)$-term.
Let $\b b\in\gc B$ have sorts matching those of $\b x$,\footnote{That is,
 $\b b\in\c B\strut$, since $\tau(\b x)$ is an $\c L'$-term of algebra sort, so
all variables in $\b x$ also have algebra sort.
This is because no function symbols in $\c L'$ 
take any arguments of point sort.}
and write $b=\tau^{\gc B}(\b b)\in\c B$.
Then
\[
\begin{array}{rcll}
\gc B'\models\theta(\b b)
&\iff&
\gc B'\models\u\mu(\tau(\b b))
&\mbox{as }\theta=\u\mu(\tau(\b x))
\\
&\iff&
\gc B'\models\u\mu(b)
&\mbox{by definition of  }b
\\
&\iff&
\gc A'^*\models\u\mu(\sigma(b))
&\mbox{by definition of the expansion }\gc B'
\\
&\iff&
\gc A'^*\models\u\mu(\tau(\sigma(\b b)))
&\mbox{as $\sigma$ is $\c L(\gc A)$-elementary, so }
\sigma(b)=\tau^{\gc A^*}(\sigma(\b b))
\\
&\iff&
\gc A'^*\models\theta(\sigma(\b b))
&\mbox{as }\theta=\u\mu(\tau(\b x)).

\end{array}
\]
The boolean cases are standard.

\smallskip

It is enough to prove (2)
for $\c L'$-sentences of the form 
$\forall\b y\theta$, where $\theta(\b y)$ is simple.
For, given this much, if $\gc A'\models\exists\b x\forall\b y\theta(\b x,\b y)$,
then choose $\b a\in\gc A'$
with $\gc A'\models\forall\b y\theta(\b a,\b y)$.
Let $\u{\b a}$ be the tuple of constants
in $\c L(\gc A)$ corresponding to $\b a$.
Clearly, $\theta(\u{\b a},\b y)$ is also a
simple $\c L'$-formula,
and $\gc A'\models\forall\b y\theta(\u{\b a},\b y)$.
So by assumption,
$\gc B'\models\forall\b y\theta(\u{\b a},\b y)$.
Then certainly,
$\gc B'\models\exists\b x\forall\b y\theta(\b x,\b y)$.

So let $\theta(\b y)$ be simple
and suppose  that $\gc A'\models\forall\b y\theta$.
As $\delta:\gc A'\to\gc A'^*$ is $\c L'$-elementary,
$\gc A'^*\models\forall\b y\theta$ as well.
Let $\b b\in\gc B$ be arbitrary subject to matching the sorts of $\b y$.
Since $\gc A'^*\models\forall\b y\theta$, 
certainly $\gc A'^*\models\theta(\sigma(\b b))$.
Part~1 now gives us
 $\gc B'\models\theta(\b b)$.
Since $\b b$ was arbitrary, we
obtain $\gc B'\models\forall\b y\theta$ as required.
\end{proof}

\begin{corollary}\label{cor:A case}
If $\theta(\b x)$ is a simple $\c L'$-formula,
then $\gc A'\models\forall\b x\theta$ iff $\gc B'\models\forall\b x\theta$.
\end{corollary}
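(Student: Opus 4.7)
The plan is to prove both directions of the biconditional using the two parts of proposition~\ref{prop:better still}, exploiting the commutative triangle $\delta=\sigma\circ\iota$ together with elementarity of $\delta$.

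For the forward direction ($\gc A'\models\forall\b x\theta\Rightarrow\gc B'\models\forall\b x\theta$), I would simply invoke proposition~\ref{prop:better still}(2) with the block of existentially quantified variables taken to be empty. That part of the proposition is formulated generally enough that ``$\exists\b x\forall\b y\theta$'' specialises to ``$\forall\b y\theta$'' when $\b x$ is the empty tuple, and the proof given there goes through unchanged, so no further work is needed.

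For the backward direction ($\gc B'\models\forall\b x\theta\Rightarrow\gc A'\models\forall\b x\theta$), fix an arbitrary tuple $\b a$ of elements of $\gc A$ whose sorts match those of $\b x$. Since $\iota:\gc A\to\gc B$ is the inclusion, $\b a$ is also a tuple of $\gc B$, and the assumption $\gc B'\models\forall\b x\theta$ gives $\gc B'\models\theta(\b a)$. Applying proposition~\ref{prop:better still}(1) yields $\gc A'^*\models\theta(\sigma(\b a))$. Now the triangle $\delta=\sigma\circ\iota$ from figure~\ref{fig1}, together with the fact that $\b a$ consists of elements of $\gc A$ (so $\iota(\b a)=\b a$), gives $\sigma(\b a)=\delta(\b a)$. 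Hence $\gc A'^*\models\theta(\delta(\b a))$. Finally, the $\c L'$-elementarity of the diagonal embedding $\delta:\gc A'\to\gc A'^*$ transfers this back to $\gc A'\models\theta(\b a)$. Since $\b a$ was arbitrary, $\gc A'\models\forall\b x\theta$.

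There is essentially no obstacle here: the content of the corollary is really the extra symmetry obtained by combining the one-sided preservation of universal formulas (proposition~\ref{prop:better still}(2)) with the sort-by-sort compatibility (proposition~\ref{prop:better still}(1)) along the commuting triangle. The only subtlety worth noting is to confirm that the simple formulas are closed under the relevant instantiations: $\theta(\b a)$ remains simple when we substitute $\c L(\gc A)$-constants $\u{\b a}$ for the free variables $\b x$, which is immediate since such substitution does not introduce any new atomic formulas of the form $\u\mu(\tau)$ outside the class allowed by definition~\ref{def:simple}.
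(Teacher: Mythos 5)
Your proposal is correct, and the forward direction is exactly the paper's: both invoke proposition~\ref{prop:better still}(2) with the existential block empty. The backward direction, however, takes a genuinely different route. The paper argues by contraposition entirely within proposition~\ref{prop:better still}(2): if $\gc A'\not\models\forall\b x\theta$ then $\gc A'\models\exists\b x\neg\theta$, and since $\neg\theta$ is again simple (simple formulas are closed under boolean combinations), the proposition --- this time with the \emph{universal} block empty --- yields $\gc B'\models\exists\b x\neg\theta$, hence $\gc B'\not\models\forall\b x\theta$. You instead give a direct argument: fix $\b a\in\gc A$, transfer $\gc B'\models\theta(\b a)$ to $\gc A'^*\models\theta(\sigma(\b a))$ by proposition~\ref{prop:better still}(1), rewrite $\sigma(\b a)=\sigma(\iota(\b a))=\delta(\b a)$ using the commuting triangle of figure~\ref{fig1}, and pull back along the $\c L'$-elementary diagonal embedding $\delta$. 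Both arguments are sound. The paper's is shorter and exploits only the closure of simple formulas under negation; yours makes the underlying mechanism explicit (round trip through the ultrapower and back via $\delta$) and shows that the backward direction is really just proposition~\ref{prop:better still}(1) composed with elementarity of $\delta$, at the cost of re-deriving part of what the proof of part~(2) already packaged up. One minor remark: your closing paragraph about substituting the constants $\u{\b a}$ is not actually needed for the backward direction as you present it, since you work with tuples of elements directly rather than with sentences; that closure observation belongs to the proof of proposition~\ref{prop:better still}(2) itself, not to the corollary.
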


\begin{proof}
$\To$  is a special 
case of proposition~\ref{prop:better still}(2).
For $\Leftarrow$, if $\gc A'\not\models\forall\b x\theta$
then $\gc A'\models\exists\b x\neg\theta$, and 
since $\neg\theta$ is also simple,
the proposition yields $\gc B'\models\exists\b x\neg\theta$. Hence,
$\gc B'\not\models\forall\b x\theta$.
\end{proof}

\subsection{Some applications}

Corollary~\ref{cor:A case} will be used several times,
 beginning with the following:

\begin{corollary}\label{cor: muB uf}
For each $\mu\in\c A_+$,
the set $\u\mu^{\gc B'}$ is an \uf\ of $\c B$.
\end{corollary}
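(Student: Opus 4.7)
The plan is to observe that the assertion ``$\u\mu^{\gc B'}$ is an ultrafilter of $\c B$'' is a conjunction of universally quantified simple $\c L'$-sentences, and to transfer each of them from $\gc A'$ to $\gc B'$ via corollary~\ref{cor:A case}.

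First I would note that, by definition of $\gc A'$, the set $\u\mu^{\gc A'}$ equals $\mu$, which is an ultrafilter of $\c A$. So $\gc A'$ satisfies the standard axioms saying that $\u\mu$ picks out an ultrafilter: namely, $\u\mu(1)$; $\neg\u\mu(0)$; $\forall xy\,(\u\mu(x)\wedge\u\mu(y)\to\u\mu(x\cdot y))$; $\forall xy\,(\u\mu(x)\wedge x\cdot y=x\to\u\mu(y))$; and $\forall x\,(\u\mu(x)\vee\u\mu(-x))$. Each of these is of the form $\forall\b x\,\theta(\b x)$ (with $\b x$ possibly empty) where $\theta$ is a boolean combination of atomic $\c L(\gc A)$-formulas (like $x\cdot y=x$) and atomic formulas $\u\mu(\tau)$ for $\c L(\gc A)$-terms $\tau$ of algebra sort. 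Hence each $\theta$ is simple in the sense of definition~\ref{def:simple}.

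Next, I would apply corollary~\ref{cor:A case} to each of these simple universal sentences in turn: since each holds in $\gc A'$, it also holds in $\gc B'$. Reading the transferred sentences off in $\gc B'$, the set $\u\mu^{\gc B'}\subseteq\c B$ contains $1^{\c B}$, omits $0^{\c B}$, is closed under the meet operation of $\c B$, is upward-closed in the Boolean order of $\c B$, and contains exactly one of $b,-b$ for each $b\in\c B$. That is precisely the statement that $\u\mu^{\gc B'}$ is an ultrafilter of $\c B$.

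There is no real obstacle here beyond checking that the ultrafilter axioms can be phrased as simple universal $\c L'$-sentences; the mild subtlety is that statements like $\u\mu(1)$ and $\neg\u\mu(0)$ have no universal quantifier, but they are still covered by corollary~\ref{cor:A case} with $\b x$ taken to be the empty tuple. Once this is in hand, the result is immediate.
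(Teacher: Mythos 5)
Your proof is correct and follows essentially the same route as the paper: express the ultrafilter conditions as universally quantified simple $\c L'$-sentences, note they hold in $\gc A'$ because $\u\mu^{\gc A'}=\mu$ is an ultrafilter of $\c A$, and transfer them to $\gc B'$ by corollary~\ref{cor:A case}. The only difference is cosmetic --- the paper packs the conditions into two sentences ($\forall xyz(\u\mu(x)\wedge\u\mu(y)\wedge z\geq x\cdot y\to\u\mu(z))$ and $\forall x(\u\mu(-x)\leftrightarrow\neg\u\mu(x))$) where you use five, and your remark that the quantifier-free sentences are handled by taking $\b x$ empty is a valid reading of the corollary.
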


\begin{proof}
The conclusion says that
$\gc B'\models\forall xyz(\u\mu(x)\wedge\u\mu(y)\wedge z\geq x\cdot y\to\u\mu(z))$
and $\gc B'\models\forall x(\u\mu(-x)\leftrightarrow\neg\u\mu(x))$.
These are  universally-quantified simple $\c L'$-sentences, and they hold in $\gc A'$
since $\mu$ is an \uf\ of $\c A$.
By corollary~\ref{cor:A case}, they also hold in $\gc B'$.
\end{proof}

So  $\u\mu^{\gc B'}\in\c B_+$.

\begin{definition}\label{def:W}
For $w\in W$, let $\hat w=\{a\in\c A:\gc A\models a\inn w\}$.
Let $\wh W=\{\hat w:w\in W\}$.
\end{definition}

Because $\gc B=(\c B,\c B_+)$,
we have
$\gc B\models\forall xyzt(x\inn t\wedge y\inn t\wedge z\geq x\cdot y\to z\inn t)$
and $\gc B\models\forall xt(-x\inn t\leftrightarrow \neg(x\inn t))$.
Because $\gc A\preceq\gc B$,
these $\c L$-sentences also hold in $\gc A=(\c A,W)$.
We conclude that
each $\hat w$ is an \uf\ of $\c A$,
and $\wh W\subseteq\c A_+$.

\begin{example}\label{eg:conv fails}\rm
The converse implication in 
proposition~\ref{prop:better still}(2) can fail.
For example, suppose that $\c A_+$ is uncountable.
So there is $\mu\in\c A_+\setminus\wh W$.
Consider the simple $\c L'$-formula 
\[
\theta(x,y)\;\:=\;
y\inn x\leftrightarrow\u\mu(y).
\]
By corollary~\ref{cor: muB uf}
we have $\u\mu^{\gc B'}\in\c B_+$,
and evidently
$\gc B'\models\forall y(y\inn \u\mu^{\gc B'}\leftrightarrow\u\mu(y))$.
Hence,  $\u\mu^{\gc B'}$ witnesses $\gc B'\models \exists x  \forall y\theta$.
But $\mu\notin\wh W$, so
there is no $w\in W$ with $\gc A'\models\forall y(y\inn w\leftrightarrow\u\mu(y))$.
Hence,
$\gc A'\not\models\exists x  \forall y\theta$.

So  the elementarily-equivalent $\c L'$-\str s 
$\gc A'$ and $\gc A'^*$ need not be elementarily equivalent to $\gc B'$,
since they may disagree on the $\c L'$-sentence $\exists x\forall y\theta$.
It follows that the embeddings $\iota:\gc A'\to\gc B'$
and $\sigma:\gc B'\to\gc A'^*$ are not $\c L'$-elementary in general.
\end{example}

\subsection{An embedding $f:\c A_+\to\c B_+$}\label{ss:def f}

\begin{definition}\label{def:f}
Let $f:\c A_+\to\c B_+$
be given by:
$f(\mu)=\u\mu^{\gc B'}$, for each $\mu\in\c A_+$.
\end{definition} 
By corollary~\ref{cor: muB uf}, $f$ is well defined.
We plainly have
\begin{equation}\label{e:def f}
\gc B'\models\forall x(\u\mu(x)\leftrightarrow x\inn f(\mu))
\end{equation}
for each $\mu\in\c A_+$.
A  similar map was given by Surendonk in \cite{Surendonk96};
see  \cite{Surendonk97,Surendonk98} for more on this topic.

\penalty-100

\begin{corollary}\label{cor:f emb}
The map $f$
is  a frame embedding,
and $f(\hat w)=w$
for each $w\in W$.
\end{corollary}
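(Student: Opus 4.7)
The plan is to show three things: (i) $f$ preserves and reflects each $R_\di$, (ii) $f$ is injective, and (iii) $f(\hat w)=w$ for $w\in W$. All three will follow from corollary~\ref{cor:A case}, using the characterisation~(\ref{e:def f}) of $f(\mu)$ as the \uf\ of $\c B$ defined by $\u\mu$ in $\gc B'$. The main idea is that every property of interest can be encoded as a \emph{simple} $\c L'$-sentence (in the sense of definition~\ref{def:simple}) that is in $\forall$-form, and is therefore transferred between $\gc A'$ and $\gc B'$ by corollary~\ref{cor:A case}.

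For preservation and reflection of $R_\di$, I would note that $R_\di\mu\nu$ holds in $\c A_+$ iff $\gc A'\models\forall x\bigl(\u\nu(x)\to\u\mu(\di x)\bigr)$, by the definition of the canonical frame. The formula inside $\forall x$ is simple, so by corollary~\ref{cor:A case} this is equivalent to $\gc B'\models\forall x\bigl(\u\nu(x)\to\u\mu(\di x)\bigr)$. Using~(\ref{e:def f}) on both sides, this unfolds to $\forall b\in\c B\,\bigl(b\inn f(\nu)\to \di b\inn f(\mu)\bigr)$, which is exactly $R_\di f(\mu)f(\nu)$ in $\c B_+$.

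Injectivity of $f$ is a side effect: for each $a\in\c A$, the simple formula $\u\mu(\u a)$ (a closed atomic simple formula, vacuously universally quantified) holds in $\gc A'$ iff it holds in $\gc B'$, so by~(\ref{e:def f}), $a\in\mu$ iff $a\inn f(\mu)$. Thus $\mu\neq\nu$ in $\c A_+$ forces them to disagree on some $a\in\c A$, and $f(\mu)$ and $f(\nu)$ then disagree on this $a$.

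For $f(\hat w)=w$ when $w\in W$, I would use the fact that $w$ has a constant $\u w$ of point sort in $\c L(\gc A)$. Consider the simple $\c L'$-formula $\theta(x):\,x\inn\u w\leftrightarrow\u{\hat w}(x)$. In $\gc A'$, for every $a\in\c A$, both sides of $\theta(a)$ are equivalent to $\gc A\models a\inn w$, by the definition of $\hat w$; hence $\gc A'\models\forall x\,\theta(x)$. Corollary~\ref{cor:A case} transfers this to $\gc B'\models\forall x\,\theta(x)$, and using~(\ref{e:def f}) one concludes that for every $b\in\c B$, $b\inn w$ iff $b\inn f(\hat w)$, giving $w=f(\hat w)$ as \uf s of~$\c B$. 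No step is really hard once the simple-formula formalism of proposition~\ref{prop:better still} is in place; the only subtlety is remembering that elements of $W\subseteq\c B_+$ are \uf s of $\c B$ (not of $\c A$), which is why the $\c L(\gc A)$-constant $\u w$ is indispensable for the last part.
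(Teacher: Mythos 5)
Your proof is correct and follows essentially the same route as the paper: each of the three claims is encoded as a universally quantified simple $\c L'$-sentence and transferred between $\gc A'$ and $\gc B'$ via corollary~\ref{cor:A case}, then unfolded using~(\ref{e:def f}). The only (immaterial) difference is that you prove injectivity directly by separating $\mu$ and $\nu$ with an element $a\in\c A$, whereas the paper transfers $\forall x(\u\mu(x)\leftrightarrow\u\nu(x))$ from $\gc B'$ back to $\gc A'$.
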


\begin{proof}
First we check that $f$ is injective.
Let  $\mu,\nu\in\c A_+$ and suppose that
$f(\mu)= f(\nu)$, so $\u\mu^{\gc B'}=\u\nu^{\gc B'}$.
So if $\theta$ is the $\c L'$-sentence $\forall x(\u\mu(x)\leftrightarrow\u\nu(x))$,
then $\gc B'\models\theta$.
By corollary~\ref{cor:A case} we have
$\gc A'\models\theta$ too, so 
$\mu=\nu$.

Now let $\di\in\M$  be given.
Then for each  $\mu,\nu\in\c A_+$, we have $\c A_+\models R_\di\mu\nu$
iff $\gc A'\models\forall x(\u\nu(x)\to\u\mu(\di x))$.
By corollary~\ref{cor:A case}, this is
iff $\gc B'\models\forall x(\u\nu(x)\to\u\mu(\di x))$,
iff $\c B_+\models R_\di(\u\mu^{\gc B'},\u\nu^{\gc B'})$
--- that is, $\c B_+\models R_\di(f(\mu),f(\nu))$.
So $f$ preserves each $R_\di$ both ways.

Hence, $f$ is an embedding (in the usual model-theoretic sense) from $\c A_+$ to $\c B_+$.

Finally, let $w\in W$ be given.
Let $\mu=\hat w\in\c A_+$.
Plainly, $\gc A'\models\forall x(\u\mu(x)\leftrightarrow x\inn \u w)$.
By corollary~\ref{cor:A case},
$\gc B'\models\forall x(\u\mu(x)\leftrightarrow x\inn\u w)$ as well.
By~\eqref{e:def f},
we obtain $\gc B\models\forall x(x\inn\u w\leftrightarrow x\inn f(\mu))$.
But it follows from the definition of $\gc B=(\c B,\c B_+)$
that $\gc B\models\forall yz(\forall x(x\inn y\leftrightarrow x\inn z)\to y=z)$,
so $w=f(\mu)=f(\hat w)$.
\end{proof}

We remark that $f$ need not be a bounded morphism.
See \cite[\S6]{Surendonk96} for related discussion.

\begin{problem}\label{prob:f elem}
\rm
Is $f$ elementary --- or can it be made so by  choosing the \upwr\ $\gc A^*$ appropriately?
\end{problem}


\subsection{A strategy for proving total canonicity}\label{ss:strategy}

We will now describe a method, 
based on embeddings $f:\c A_+\to\c B_+$ as above, 
that will be used to show that certain varieties are \nice.

We  assume that  $\phi_0$ 
is a modal $\M$-formula true at some point in $\c B_+$ under the
valuation $g:\Q\to\wp(\c B_+)$
given by $g(q_i)=Q_i^\gc B$ for each $i<\omega$.

Recall that each modal $\M$-formula $\psi$ 
has a \emph{standard translation:}
an $\c L$-formula
$\psi^x(x)$ for each point-sorted variable $x$.
We define $\psi^x$ by induction on $\psi$ as usual: 
$\top^x=\top$,
$q_i^x=Q_i(x)$ for $i<\omega$, 
$(\neg\psi)^x=\neg(\psi^x)$,
$(\psi\vee\chi)^x=\psi^x\vee\chi^x$, and
$(\di\psi)^x=\exists y(R_\di xy\wedge\psi^y)$
for each $\di\in \M$, where $y$ is some point-sorted variable other than $ x$.
A standard induction on $\psi$
now shows that for each $t\in\c B_+$ (and any $x$),
we have
\begin{equation}\label{e:stdtr}
(\c B_+,g),t\models\psi \iff \gc B\models\psi^x(t).
\end{equation} 

Since $\phi_0$ is satisfied in the Kripke model $(\c B_+,g)$,
we have $\gc B\models\exists x\phi_0^x$.
By the Tarski--Vaught criterion (\S\ref{ss:A<B}), 
there is $w_0\in W$
with $\gc B\models\phi_0^x(w_0)$,
and so
\begin{equation}\label{e:phi0}
(\c B_+,g),w_0\models\phi_0.
\end{equation}
We fix such a $w_0$.
Let $\c F=\c A_+(\wh{w_0})$ be
the inner subframe of $\c A_+$ generated by $\wh{w_0}$
(see definitions~\ref{def:etc} and~\ref{def:W}),
and let $\c M$ be the
submodel of $(\c B_+,g)$ with domain $f(\c F)$.
By corollary~\ref{cor:f emb}, $w_0=f(\wh{w_0})\in f(\c F)=\c M$.

\begin{definition}\label{def:truth lem}
We will say that \emph{the Truth Lemma holds in $V$} if the following always holds in these given circumstances:

\begin{quote}\em
For each $m\in\c M$ and modal $\M$-formula $\psi$,
we have $\c M,m\models\psi$ iff $(\c B_+,g),m\models\psi$.
In particular, $\c M,w_0\models\phi_0$.
\end{quote}
\end{definition}

\begin{theorem}\label{thm:truthlemma}
If the Truth Lemma holds in $V$,
then $V$ is \nice.
\end{theorem}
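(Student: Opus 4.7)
The plan is to argue by contradiction. Since $\M$ is countable, ``nice'' amounts to: $\omega$-canonicity of $V$ implies total canonicity of $V$. So suppose $V$ is $\omega$-canonical but not totally canonical; I will use the Truth Lemma to derive a contradiction.

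Because $V$ fails to be totally canonical, there is $\c C\in V$ with $\c C^\sigma\notin V$. Some equation of $V$ therefore fails in $\c C^\sigma$; translating back to modal form via $\varphi_{\tau=\upsilon}$, this says there is a modal formula $\phi\in L_V$ and a valuation $g:\Q\to\wp(\c C_+)$ such that the Kripke model $(\c C_+,g)$ does not verify $\phi$. Take $\c B=\c C$, set $\phi_0=\neg\phi$, and expand $\gc B=(\c B,\c B_+)$ to an $\c L$-structure by interpreting $Q_i^\gc B=g(q_i)$ for each $i<\omega$. Then $\gc B$ meets the hypotheses set out in \S\ref{ss:B} and \S\ref{ss:strategy}: $\c B\in V$, and $\phi_0$ is satisfied at some point in the Kripke model $(\c B_+,g)$.

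Now carry out the construction of \S\ref{ss:A<B} and \S\ref{ss:strategy}: pick a countable $\gc A=(\c A,W)\preceq\gc B$, choose $w_0\in W$ with $(\c B_+,g),w_0\models\phi_0$ as in~\eqref{e:phi0}, form the inner subframe $\c F=\c A_+(\wh{w_0})$ of $\c A_+$, and let $\c M$ be the submodel of $(\c B_+,g)$ with domain $f(\c F)$. Because $V$ is an elementary class and $\gc A\preceq\gc B$, we have $\c A\in V$; and since $\c A$ is countable, it is $\omega$-generated, so by $\omega$-canonicity $\c A^\sigma\in V$. But $\c A^\sigma$ is the complex algebra of the frame $\c A_+$, so $\c A_+\models L_V$. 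Validity is preserved in passing to inner subframes, hence $\c F\models L_V$.

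By corollary~\ref{cor:f emb}, the map $f:\c A_+\to\c B_+$ is a frame embedding; restricted to $\c F$ it is a frame isomorphism from $\c F$ onto the subframe of $\c B_+$ with domain $f(\c F)$. Validity being isomorphism-invariant, $f(\c F)\models L_V$, so every Kripke model on the frame $f(\c F)$ verifies $\phi$. In particular $\c M,w_0\models\phi$, which contradicts $\c M,w_0\models\phi_0=\neg\phi$ as supplied by the Truth Lemma. Hence $V$ must be nice. The key conceptual point is the passage ``$\c A\preceq\gc B$ of algebra sort $\Rightarrow$ $\c A\in V$ $\Rightarrow$ $\c A^\sigma\in V$ $\Rightarrow$ $\c A_+\models L_V$ $\Rightarrow$ $\c F\models L_V$''; everything else is bookkeeping from the setup of this section.
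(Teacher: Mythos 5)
Your proof is correct and is essentially the paper's own argument run in contrapositive form: the paper derives failure of $\omega$-canonicity directly by using the Truth Lemma to transfer satisfiability of $\neg\varphi_\varepsilon$ from $\c M$ through $f(\c F)\cong\c F$ up to $\c A_+$, whereas you assume $\omega$-canonicity and push validity of $L_V$ down the same chain $\c A_+\to\c F\to f(\c F)$ to contradict the Truth Lemma. The machinery (the setup of \S\ref{ss:strategy}, corollary~\ref{cor:f emb}, inner-subframe preservation, and the duality between $\c A^\sigma\in V$ and $\c A_+\models L_V$) is identical, so no further comment is needed.
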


\begin{proof}
Assume that $V$ is not canonical.
So there is $\c B\in V$ with  $\c B^\sigma\notin V$.
Hence, some equation $\varepsilon$ valid in $V$
is not valid in $\c B^\sigma$.
Translate $\varepsilon$ to a modal formula $\varphi_\varepsilon$ 
as in \S\ref{ss:algs}.
Then $\neg\varphi_\varepsilon$ is satisfiable in the frame $\c B_+$.
Form $\gc B$ from $\c B$, 
choosing the $Q_i^\gc B$ so 
that the valuation $g$ above will satisfy $\neg\varphi_\varepsilon$ in $\c B_+$,
and then form $\gc A$, $\c A$, $W$,
$w_0$, $\c F$, $\c M$,  etc.,
all as above, using
 $\phi_0=\neg\varphi_\varepsilon$.
By the Truth Lemma, $\c M,w_0\models\neg\varphi_\varepsilon$.

Now the frame $f(\c F)$ of $\c M$ is isomorphic (via $f^{-1}$) to
 $\c F$.
Hence,
 $\neg\varphi_\varepsilon$ is satisfiable in $\c F$.
As $\c F$ is an inner subframe of $\c A_+$, 
standard modal arguments (see, e.g.,
\cite[theorem 3.14]{BRV:ml} or \cite[corollary 2.9]{ChagZak:ml})
show that
$\neg\varphi_\varepsilon$ is satisfiable in $\c A_+$,
and it follows that $\c A^\sigma\not\models\varepsilon$ and $\c A^\sigma\notin V$.
Since $\c A\in V$ is countable, we see that $V$ is not $\omega$-canonical.
Hence, $V$ is \nice.
\end{proof}
For countable $\M$,
this result reduces the problem of showing that 
a variety is \nice\
to the problem of showing that the Truth Lemma holds in it.

\section{Logics above $K5_2$}\label{sec: K5}

Application of the strategy of the last section
(\S\ref{ss:strategy}) will now be illustrated by showing that it can be successfully applied in the monomodal language
(where $\M=\{\di\}$) to all normal modal logics that include a logic we call $K5_2$. To define this, for each $n<\omega$ let         $\di^n$ denote a sequence of $\di$'s of length $n$. Then the  axiom   $\di^n p\to\bo\di p$ will be called $5_n$. To describe its corresponding frame condition we introduce the notation $R_\di{}^n$ for the relation $R_{\di^n}$, i.e.\ the $n$-fold composition of $R_\di$ with itself. A frame validates  $5_n$  iff its relation  satisfies
\begin{equation}  \label{n152}
 \text{for all $x,y,z$, \ $R_\di{}xy$ and  $R_\di{}^{n}xz$  implies  $R_\di yz$.}
  \end{equation} 
 $5_n$ is a Sahlqvist formula with first-order correspondent \eqref{n152}. This implies that if an algebra $\c B$ validates $5_n$, then its canonical frame $\c B_+$ will satisfy \eqref{n152} and so validate $5_n$.
  
$5_0$ is the Brouwerian axiom $ p\to\bo\di p$, corresponding to symmetry of $R_\di$, which is  the case $n=0$ of \eqref{n152}.
  $5_1$ is $\di p\to\bo\di p$, whose equivalent form $\di \bo p\to \bo p$ is commonly known as 5, due to its role in the definition of Lewis's system S5.

\subsection{Canonicity of extensions of $K5_2$} \label{ss:K52 canon}
We now turn to $5_2$.

\begin{lemma}   \label{lem:valid52}
A frame validates  $5_2$ iff for all $n<\omega$  it satisfies
\begin{equation}  \label{e52}
 \text{for all $x,y,z$, \  $R_\di{}^nxy$ and  $R_\di{}^{n+1}xz$  implies  $R_\di yz$.}
  \end{equation} 
\end{lemma}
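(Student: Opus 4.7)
The lemma is an equivalence between the Sahlqvist correspondent \eqref{n152} at $n=2$ (the frame condition for $5_2$ just recorded) and its ``parametrised'' generalisation \eqref{e52} ranging over all $n<\omega$. For the direction $(\Leftarrow)$ I would just instantiate \eqref{e52} at $n=1$: since $R_\di{}^1$ is $R_\di$, this reproduces \eqref{n152} at $n=2$ verbatim.

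For $(\To)$, assume \eqref{n152} at $n=2$ --- that is, $R_\di pq \wedge R_\di{}^2 pr \to R_\di qr$ for all $p,q,r$ --- and prove \eqref{e52} by induction on $n$. The base $n=0$ is immediate, since $R_\di{}^0 xy$ forces $y=x$ by definition~\ref{def:etc}, and then $R_\di xz$ is exactly $R_\di yz$.

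For the inductive step from $n$ to $n+1$, suppose $R_\di{}^{n+1} xy$ and $R_\di{}^{n+2} xz$. Decompose each path at its last step: pick $y'$ with $R_\di{}^n xy'$ and $R_\di y' y$, and $z'$ with $R_\di{}^{n+1} xz'$ and $R_\di z' z$. The induction hypothesis, applied with the triple $(x,y',z')$, yields $R_\di y' z'$ from $R_\di{}^n xy'$ and $R_\di{}^{n+1} xz'$. Combined with $R_\di z' z$ this gives $R_\di{}^2 y' z$. Now feeding $R_\di y' y$ and $R_\di{}^2 y' z$ into the assumption with $p = y'$, $q = y$, $r = z$ delivers $R_\di yz$, closing the induction.

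I do not foresee a serious obstacle. The main idea is to decompose both $R$-paths at their terminal step so that the induction hypothesis connects the two penultimate points, after which a single application of the $5_2$ correspondent closes the step. The only mild care needed is in parsing the base case via the convention $R_\di{}^0 xy \Leftrightarrow x=y$ coming from $R_\emptyset$ in definition~\ref{def:etc}.
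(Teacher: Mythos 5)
Your proof is correct and is essentially identical to the paper's: both directions are handled the same way, with the same decomposition of the two $R_\di$-paths at their final step in the inductive case and the same appeal to the $n=1$ instance of \eqref{e52} (equivalently, \eqref{n152} at $n=2$) to close the induction.
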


\begin{proof}
The case $n=1$ of \eqref{e52} is the same as the case $n=2$ of \eqref{n152}, which is the condition for validity of $5_2$. So it suffices to show that the case $n=1$  of \eqref{e52} implies that  \eqref{e52} holds for all $n$.    

So assume the $n=1$ case of \eqref{e52}. Then we prove \eqref{e52} by induction on $n$. The base case $n=0$ asserts that if $x=y$ and $R_\di xz$ then $R_\di yz$, which is true. Now assume inductively that \eqref{e52} holds for $n$. Then if $R_\di{}^{n+1}xy$ and  $R_\di{}^{n+2}xz$, there are $y',z'$ such that $R_\di{}^nxy'$ and $R_\di{}y'y$, and $R_\di{}^{n+1}xz'$ and $R_\di{}z'z$. From $R_\di{}^nxy'$ and $R_\di{}^{n+1}xz'$ by the hypothesis on $n$ we get $R_\di{}y'z'$. Thus we have $R_\di{}y'y$ and $R_\di{}^2y'z$ (since $R_\di{}y'z'$ and $R_\di{}z'z$), hence $R_\di{}yz$ by the $n=1$ case. Thus \eqref{e52} holds for $n+1$.
\end{proof}

\begin{proposition}\label{prop:tl for 52}
Let $V$ be any variety of monomodal 
BAOs that validate $5_2$. Then the Truth Lemma holds in $V$.
\end{proposition}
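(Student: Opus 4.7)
The plan is to prove the Truth Lemma by induction on the modal formula $\psi$. Atomic, boolean, and the $\Rightarrow$ direction of the diamond case will be routine, using only that $\c M$ is a submodel of $(\c B_+,g)$. The entire burden will lie on the $\Leftarrow$ direction for $\psi=\di\chi$: given $m\in\c M$ with $(\c B_+,g),m\models\di\chi$, I need to produce some $m'\in\c M$ with $\c B_+\models R_\di(m,m')$ and $(\c B_+,g),m'\models\chi$, after which the inductive hypothesis will yield $\c M,m'\models\chi$ and hence $\c M,m\models\di\chi$.

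First I would write $m=f(\mu)$ for some $\mu\in\c F$. Since $\c F=\c A_+(\wh{w_0})$, some $n\geq 0$ satisfies $\c A_+\models R_\di{}^n(\wh{w_0},\mu)$. By Corollary~\ref{cor:f emb}, $f$ is a frame embedding with $f(\wh{w_0})=w_0$, so $\c B_+\models R_\di{}^n(w_0,m)$. Combining this with a witness $t\in\c B_+$ for $(\c B_+,g),m\models\di\chi$ gives $\c B_+\models R_\di{}^{n+1}(w_0,t)$ and $\gc B\models\chi^x(t)$, so
\[
\gc B\models\exists y\bigl(R_\di{}^{n+1}(\u{w_0},y)\wedge\chi^x(y)\bigr).
\]
Because $w_0\in W$, this is an $\c L(\gc A)$-sentence; the Tarski--Vaught criterion will then produce $t'\in W$ with $\gc B\models R_\di{}^{n+1}(w_0,t')\wedge\chi^x(t')$, and $\gc A$ satisfies the same by elementarity.

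Two conditions on $t'$ need to be verified. First, $(\c B_+,g),t'\models\chi$ follows from $\gc B\models\chi^x(t')$ by the standard translation. Second, for $t'\in\c M$: the chain in $W$ certifying $\gc A\models R_\di{}^{n+1}(w_0,t')$ lifts via $s\mapsto\hat s$ to a chain in $\c A_+$, because the defining axiom for $R_\di$ on canonical frames (which holds in $\gc A$ by elementarity) makes $\gc A\models R_\di(s,s')$ equivalent to $\c A_+\models R_\di(\hat s,\hat{s'})$ for $s,s'\in W$; hence $\c A_+\models R_\di{}^{n+1}(\wh{w_0},\hat{t'})$, so $\hat{t'}\in\c F$ and $t'=f(\hat{t'})\in\c M$. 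The final thing to check will be $\c B_+\models R_\di(m,t')$: since $\c B\in V$ validates the Sahlqvist formula $5_2$, its canonical frame $\c B_+$ validates $5_2$ and satisfies the first-order condition of Lemma~\ref{lem:valid52}. Applied to $R_\di{}^n(w_0,m)$ and $R_\di{}^{n+1}(w_0,t')$ in $\c B_+$, that condition yields $R_\di(m,t')$, and setting $m':=t'$ closes the induction step.

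The main obstacle is precisely this last step. The Tarski--Vaught witness $t'$ is extracted along a chain from $w_0$ that is chosen independently of any chain reaching $m$, so in an arbitrary frame there would be no reason for $R_\di(m,t')$ to hold. It is precisely the strengthened form of the $5_2$ condition in Lemma~\ref{lem:valid52} --- which collapses the level-$n$ and level-$(n+1)$ futures of $w_0$ so that every level-$(n+1)$ point is a direct $R_\di$-successor of every level-$n$ point --- that will force $t'$ to be a successor of $m$ and close the argument.
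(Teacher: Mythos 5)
Your proposal is correct and follows essentially the same route as the paper's proof: pull $m$ back along $f$ to get $(R_\di^{\gc B})^n w_0 m$, use Tarski--Vaught to find a witness for $\di\chi$ at distance $n+1$ from $w_0$ inside $W$ (hence inside $\c M$ via $f$), and close with the strengthened $5_2$ condition of Lemma~\ref{lem:valid52} applied in $\c B_+$. The only cosmetic differences are that the paper applies Tarski--Vaught $n+1$ times to extract the whole chain in $W$, where you apply it once and recover the chain by elementarity, and that you justify $\hat{t'}\in\c F$ directly from the canonical-frame axiom rather than by citing that $f$ is a frame embedding.
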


\begin{proof}
Suppose we have an algebra $\c B\in V$, a countable elementary substructure $\gc A=(\c A,W)$ of $\gc B=(\c B,\c B_+)$, an embedding $f:\c A_+\to\c B_+$
such that $f(\hat w)=w$ for every $w\in W$ (corollary~\ref{cor:f emb}),
and a valuation $g$ on $\c B_+$; with $\c M$ being the submodel of $(\c B_+,g)$ with 
domain $f(\c F)$, where $\c F$ is the inner subframe $\c A_+(\wh{w_0})$ of $\c A_+$ 
generated by  $\wh{w_0}$ for some $w_0\in W$.
Then we have to show that for each $m\in\c M$ and modal formula $\psi$,
we have $\c M,m\models\psi$ iff $(\c B_+,g),m\models\psi$.

The proof is by induction on the formation of $\psi$. The significant case is $\di\psi$, assuming inductively the result for $\psi$.
If $\c M,m\models\di\psi$, then there is some $z_0$ in $\c M$ with $R_\di^{\c M} m z_0$ and 
$\c M,z_0\models\psi$. But then  $(\c B_+,g),z_0\models\psi$ by the induction hypothesis on $\psi$, and
$R_\di^{\gc B} m z_0$ as $\c M$ is a submodel of $\c B_+$. This shows that $(\c B_+,g),m\models\di\psi$.

Conversely, suppose that
$(\c B_+,g),m\models\di\psi$ with $m\in\c M$.
Then there is some $z_0\in\c B_+$ with $R_\di^{\gc B} m z_0$  and  $(\c B_+,g),z_0\models\psi$. Hence $\gc B\models\psi^x(z_0)$ by \eqref{e:stdtr}. Since $m\in\c M$,  we have $(R_\di^{\gc B})^n w_0 m$ for some $n\geq 0$, hence  $(R_\di^{\gc B})^{n+1}w_0 z_0$.
Therefore
\begin{equation*}
\gc B\models \exists z_1\cdots\exists z_n\exists x(R_\di w_0z_1
\land R_\di z_1z_2\land\cdots\land R_\di z_nx\land\psi^x).
\end{equation*}
Now $w_0\in W$, so $n+1$ applications of the Tarski--Vaught criterion to this last fact yield elements $w_1,\dots,w_n,w\in W$ such that 
$$
\gc B\models R_\di w_0 w_1\land R_\di w_1w_2\land\cdots\land R_\di w_nw\land\psi^x(w).
$$
So $(R_\di^{\gc B})^{n+1}w_0w$.
Now $W\not\subseteq \c M$ in general.
But as $f:\c A_+\to\c B_+$ is an embedding, and  $\hat u\in\c A_+$ and $f(\hat u)=u$
for each $u\in W$,
we see that $(R^{\c A_+}_\di)^{n+1}\wh{w_0}\hat w$.
Since $\c F=\c A_+(\wh{w_0})$,
we obtain $\hat w\in\c F$,
and hence $w=f(\hat w)\in\c M$.
By \eqref{e:stdtr}, $(\c B_+,g),w\models\psi$. 
Hence $\c M,w\models\psi$ by the induction hypothesis.
Also, 
we now have  $(R_\di^{\gc B})^n w_0 m$ and 
$(R_\di^{\gc B})^{n+1} w_0 w$, so  \eqref{e52} gives $R^{\gc B}_\di mw$.
This is because $\c B\in V$, so $\c B$ validates $5_2$, hence so does the frame $\c B_+$, and so Lemma \ref{lem:valid52} applies to $\c B_+$.

Since $R^{\gc B}_\di mw$ we get  $R^{\c M}_\di mw$ as $m$ and $w$ are in $\c M$. Together with $\c M,w\models\psi$, this implies the desired  $\c M,m\models\di\psi$, and completes the inductive case for $\di$.
\end{proof}

$K5_n$ is the smallest normal logic to contain $5_n$. 
$K5_2$  is a sublogic of $K5$, as can be seen model-theoretically by deriving the case $n=2$ of  \eqref{n152} from its $n=1$ case. (A proof-theoretic demonstration is also straightforward.)
It follows from the result just proved and Theorem \ref{thm:truthlemma} that:

\begin{theorem}\label{thm:K52 canon}
Every normal logic extending $K5_2$ that is $\omega$-canonical is totally canonical. 
\end{theorem}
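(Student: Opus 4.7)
The plan is to reduce the theorem to the algebraic machinery already set up, via the standard correspondence $L\leftrightarrow V_L$ recalled in \S\ref{ss:algs}. Let $L$ be any normal modal logic in the monomodal signature $\M=\{\di\}$ extending $K5_2$ that is $\omega$-canonical, and let $V=V_L$ be the associated variety of $\{\di\}$-BAOs. Since $5_2\in K5_2\subseteq L$, every algebra in $V$ validates $5_2$. Moreover, as noted in \S\ref{ss:canonicity}, $L$ being $\omega$-canonical translates directly to $V$ being $\omega$-canonical (every $\omega$-generated $\c A\in V$ has $\c A^\sigma\in V$). The goal is to upgrade this to total canonicity of $V$, which is again equivalent to total canonicity of $L$.

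To do this, I would apply Proposition~\ref{prop:tl for 52} to $V$: since $V$ consists of monomodal BAOs validating $5_2$, the Truth Lemma (Definition~\ref{def:truth lem}) holds in $V$. Then Theorem~\ref{thm:truthlemma} applies and yields that $V$ is \nice, i.e.\ either totally canonical or not even $\omega$-canonical (recall that for monomodal $\M$ we have $|\M|+\omega=\omega$). Since we have already observed that $V$ is $\omega$-canonical, the second alternative is ruled out, so $V$ must be totally canonical. Translating back through $L_V=L$, we conclude that $L$ itself is totally canonical, as required.

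There is essentially no obstacle here, because all the substantive work has been done in Proposition~\ref{prop:tl for 52} (where $5_2$ was used via Lemma~\ref{lem:valid52} to close up the existential diamond case of the induction using points in $W$) and in the general strategy encapsulated by Theorem~\ref{thm:truthlemma}. The only thing one needs to do is verify that the two hypotheses of Theorem~\ref{thm:truthlemma} and the $\omega$-canonicity assumption combine correctly: \nice ness gives a dichotomy, and $\omega$-canonicity kills one horn of it. The algebra-to-logic dictionary then delivers the theorem in its stated form.
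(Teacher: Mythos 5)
Your proposal is correct and follows exactly the paper's own route: the paper derives this theorem immediately from Proposition~\ref{prop:tl for 52} together with Theorem~\ref{thm:truthlemma}, exactly as you do. Your write-up merely makes explicit the (routine) translation between the logic $L$ and the variety $V_L$ and the observation that $\omega$-canonicity rules out one horn of the \nice ness dichotomy, which the paper leaves implicit.
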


\subsection{Continuum-many logics between $K5_2$ and $K5$}\label{ss:continuum K52-K5}

Theorem~\ref{thm:K52 canon}
 applies to all normal extensions of $K5$. But whereas $K5$ has only countably many extensions \cite{nagl:exte85}, we will now show that  $K5_2$ has continuum many. Indeed there are continuum many between $K5_2$ and $K5$.

The proof adapts a construction from \cite[p.162]{ChagZak:ml}. 
For each $j\in\omega$, let $\c D_j$ be the `diamond-shaped' frame depicted in Figure \ref{figk52}.
\begin{figure}
\begin{center}

\tikzset{->-/.style={decoration={
  markings,
  mark=at position #1 with {\arrow{>}}},postaction={decorate}}}

\begin{tikzpicture}[ scale=1, >=Stealth, 
V/.style={},
e/.style={}]

\node (j+1) at (0,0) [V]  {$j+1$} ;
\node (j') at (2,-1.4) [V]  {$j'$} ;
\node (j'') at (2,1.4) [V]  {$j''$} ;
\node (j-1) at (4,0) [V]  {$j-1$} ;

\node (j-2) at (6,0) [V]  {$j-2$} ;
\node (etc) at (8,0) [e]  {$\cdots$} ;
\node (1) at (10,0) [V]  {$1$} ;
\node (0) at (12,0) [V]  {$0$} ;

\draw[->-=.6]  (j+1) to  (j');
\draw[->-=.6]  (j+1) to  (j'');
\draw[->-=.6]  (j') to  (j-1);
\draw[->-=.6]  (j'') to  (j-1);
\draw[->-=.7]  (j-1) to  (j-2);
\draw[->-=.6]  (j-2) to  (etc);
\draw[->-=.6]  (etc) to  (1);
\draw[->-=.6]  (1) to (0); 

\end{tikzpicture}

\caption{ The intransitive frame $\c D_j$ validating $5_2$}
\label{figk52}
\end{center}
\end{figure}
We take this as an intransitive
 irreflexive frame in which the only relations that  hold are those given by the displayed arrows. Thus the  relation $R_\di$ of $\c D_j$ is
$$
\begin{array}{rl}
\{ (j+1,j'),(j+1,j''), (j',j-1),  (j'',j-1)\} \cup\{(i,i-1): j-1\geq i\geq 1\},&\mbox{if }j>0,
\\[2pt]
\{ (1,0'),(1,0'')\},&\mbox{if }j=0.
\end{array}
$$
By inspection, this relation satisfies \eqref{n152} with $n=2$, so $\c D_j$ validates 5$_2$.
(If we replaced $R_\di$ by its transitive closure, $5_2$ would not be valid when $j>0$.)

For each $i\in\omega$, let $\alpha_i$ be the constant formula $\di^i\top\land\neg\di^{i+1}\top$. 
In $\c D_j$, if $i<j$ or $i=j+1$, then $\alpha_i$ holds at $i$ and nowhere else. If $i>j+1$, then $\alpha_i$ does not hold at any point. On the other hand, $\alpha_j$ holds at both $j'$ and $j''$. Now let $\varphi_i$ be the formula
$$
\bo(\alpha_i \to p) \lor \bo(\alpha_i \to \neg p).
$$
If $i\ne j$, then $\alpha_i$ is true at no more than one point in $\c D_j$, 
so $\c D_j\models\varphi_i$. But
 $\c D_j\not\models\varphi_j$, 
 for making $p$ true just at $j'$ gives a model on $\c D_j$ at which $\alpha_j \to p$ is false at $j''$ and
  $\alpha_j \to \neg p$ is false at $j'$, hence $\varphi_j$ is false at $j+1$.
  
  For each non-empty subset $I\subseteq \omega$, let $L_I$ be
  the smallest normal modal logic containing
 $K5_2\cup\{\varphi_i:i\in I\}$. 
 Suppose that $I\ne J\subseteq\omega$ with, say, some $j\in J\setminus I$.  
 As $j\notin I$ we have $\c D_j\models\varphi_i$ for all $i\in I$. Since also $\c D_j\models K5_2$,  the logic determined by $\c D_j$ includes   $K5_2\cup\{\varphi_i:i\in I\}$, so it includes $L_I$ as the smallest such. Thus $\c D_j\models L_I$. But $\c D_j\not\models \varphi_j$, so then $\varphi_j\notin L_I$.
  On the other hand $j\in J $, so $\varphi_j\in L_J$ by definition.  Hence $L_I\ne L_J$.  The case $I\setminus J\ne\emptyset$ is likewise. 
  
  Thus  there are $2^{\aleph_0}$  logics $L_I$, one for each subset $I$ of $\omega$, and they all include K5$_2$. But they are all included in $K5$, because every formula $\varphi_i$ is a $K5$-theorem. To show this, it is enough to show that $\varphi_i$ is valid in every point-generated $K5$-frame, since these frames determine the logic. By an analysis originally due to Segerberg \cite{sege:deci68a}, each such frame   either (i) is a single irreflexive point, or (ii) has the property that every point has a reflexive point in its future. In case (i), every formula of the form $\bo\psi$ is valid in the frame, hence the $\varphi_i$'s are. In case (ii), for any $i\in I$ the formula $\di^{i+1}\top$ is true at every point, hence $\alpha_i$ is false everywhere and so $\varphi_i$ again is valid.


\section{The modal logics $KU_n^\M$}\label{sec:KUn}

In the remainder of the paper,
we will define a wider
class of logics and extend the
 canonicity analysis for $K5_2$
to them. 

The rough idea is this.
Take $\M=\{\di\}$.
By~\eqref{n152} for $n=2$,
a frame $\c F$ validates $K5_2$ iff
for each $x\in\c F$, \emph{all points in $R^\c F_\di(x)$ have the same future:}
$R^\c F_\di(y)=R^\c F_\di(y')$ for all $y,y'\in R^\c F_\di(x)$.
Extended to $(R^\c F_\di)^n(x)$
by lemma~\ref{lem:valid52},
this frame condition was used
in the proof of proposition~\ref{prop:tl for 52}
for the crucial deduction
$$
(R_\di^{\gc B})^n w_0 m
\wedge (R_\di^{\gc B})^{n} w_0 w_n
\wedge R_\di^\gc B w_nw
\;\;\To\;\; R^{\gc B}_\di mw.
$$
Lemma~\ref{lem:valid52} ensures that
$R^\gc B_\di(w_n)= R^\gc B_\di(m)$, so the deduction goes through.
\emph{But the deduction can be made assuming only that
$R^\gc B_\di(w_n)\subseteq R^\gc B_\di(m)$.}
For then, $w\in R^\gc B_\di(w_n)\subseteq R^\gc B_\di(m)$, so $R^{\gc B}_\di mw$ as required.

This suggests weakening the $5_2$ frame condition to:
\emph{any two points in any $R^\c F_\di(x)$ have $\subseteq$-comparable futures.}
That is, $R^\c F_\di(y)\subseteq R^\c F_\di(y')$
or $R^\c F_\di(y')\subseteq R^\c F_\di(y)$ for all $y,y'\in R^\c F_\di(x)$.
This may seem to allow
$R^\gc B_\di(w_n)\supseteq R^\gc B_\di(m)$ above, rather than the desired converse inclusion;
we will solve this problem in proposition~\ref{prop:cof}.
We then weaken it further by replacing `two' by `$n+1$', for arbitrary $n>0$.
This is handled in the proof of proposition~\ref{prop:cof} by adding a set $K$ of parameters.
We arrive at:
\begin{itemize}
\item  \emph{given any $n+1$ points from any $R^\c F_\di(x)$,
two of them have $\subseteq$-comparable futures.}
\end{itemize}
Theorem~\ref{thm:can} will extend
theorem~\ref{thm:K52 canon}  to this case, and for arbitrary~$\M$.

\smallskip

So we are led to consider sets and sequences of points
in a Kripke frame whose futures are pairwise incomparable
with respect to inclusion,
forming a $\subseteq$-antichain.
Borrowing from the physics of spacetime,
we call such sets and sequences \emph{\ach.}
We will define them formally below, and then introduce some modal logics
called $KU^\M_n$
that limit the size of \ach\ sets,
and so stand as weakenings of $K5_2$ to which the methods of \S\ref{ss:K52 canon}
can still be applied.

We will then 
take a break from canonicity
and spend the rest of this section, and all of the next,
examining these logics.
We return to canonicity in \S\ref{sec:main canon}, where we
prove \nice ness of all normal modal $\M$-logics extending some $KU^\M_n$.
They will be called \emph{logics of finite \ach\ width.}

\subsection{Achronal sets and sequences}
\begin{definition}\label{def:pif}
Let $\c F$ be a frame and $\di\in\M$.
\begin{enumerate}

\item 
A subset $S\subseteq\c F$ is said
to be \emph{$\di$-\ach}
if
$R_\di^\c F(x)\not\subseteq R_\di^\c F(y)$ for each distinct $x,y\in S$.

\item Let  $\alpha$ be an ordinal.
A sequence $(y_i:i<\alpha)$ of points of $\c F$
is said to be \emph{$\di$-\ach}
if $R_\di^\c F(y_i)\not\subseteq R_\di^\c F(y_j)$ for each distinct $i,j<\alpha$.

\item An  \emph{\ach\ set (or sequence)}
is a $\di$-\ach\ set (or sequence, resp.)\ for some $\di\in\M$.

\end{enumerate}
\end{definition}
Sets with at most one element are $\di$-\ach.
Subsets of $\di$-\ach\ sets are $\di$-\ach, and similarly for sequences.
The following lemma
is elementary but worth nailing down.

\begin{lemma}\label{lem:pif}
Let $\c F$ be a frame, $S\subseteq\c F$,
$\di\in\M$, and  $\kappa$  a cardinal.
The following are equivalent:
\begin{enumerate}[\quad\rm 1.]
\item $S$ has a $\di$-\ach\ subset of cardinality $\kappa$,

\item there exists a $\di$-\ach\ sequence $(y_i:i<\kappa)$
of elements of $S$,

\item there is $Y\subseteq S$ such that $R^\c F_\di[Y]$
is an $\subseteq$-antichain of cardinality $\kappa$
(see definition~\ref{def:bin rels}),

\item $R^\c F_\di[S]$ has $\subseteq$-width at least $\kappa$
(see definition~\ref{def:bin rels}).

\end{enumerate}

\end{lemma}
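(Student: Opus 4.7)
The plan is to establish the four conditions equivalent by proving $1\Leftrightarrow 2$, $1\Leftrightarrow 3$, and $3\Leftrightarrow 4$, with a single pivotal observation underlying most of the work: on any $\di$-\ach\ set, the map $y\mapsto R^\c F_\di(y)$ is injective and sends distinct elements to $\subseteq$-incomparable sets. Indeed, if $y\neq y'$ both lie in a $\di$-\ach\ set then $R^\c F_\di(y)\not\subseteq R^\c F_\di(y')$, and swapping the roles of $y$ and $y'$ gives $R^\c F_\di(y')\not\subseteq R^\c F_\di(y)$ as well, so the two futures are distinct and $\subseteq$-incomparable.

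For $1\Leftrightarrow 2$ I would note that any $\di$-\ach\ sequence $(y_i:i<\kappa)$ cannot repeat values (since $y_i=y_j$ with $i\neq j$ would give $R^\c F_\di(y_i)\subseteq R^\c F_\di(y_j)$, contradicting achronality), so its range is a $\di$-\ach\ subset of $S$ of cardinality $\kappa$; conversely, any bijection $\kappa\to T$ from a $\di$-\ach\ subset $T\subseteq S$ with $|T|=\kappa$ supplies the sequence. For $1\Rightarrow 3$, taking $Y=T$ for $T$ as in condition 1 makes $R^\c F_\di[Y]$ a $\subseteq$-antichain of cardinality $\kappa$ by the pivotal observation. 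For $3\Rightarrow 1$, given $Y$ witnessing condition 3, I would use choice to pick one representative of $Y$ from each fiber of $y\mapsto R^\c F_\di(y)$; this produces $T\subseteq Y$ on which the map is injective with image $R^\c F_\di[Y]$, forcing $|T|=\kappa$, and $T$ is $\di$-\ach\ because distinct elements have distinct, hence $\subseteq$-incomparable, images in the antichain $R^\c F_\di[Y]$. Finally, $3\Leftrightarrow 4$ is essentially definition-unwinding: every $\subseteq$-antichain contained in $R^\c F_\di[S]$ has the form $R^\c F_\di[Y]$ for a suitable set $Y\subseteq S$ of preimages, so the existence of such an antichain of cardinality $\kappa$ is exactly the condition that the $\subseteq$-width of $R^\c F_\di[S]$ be at least $\kappa$.

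There is no substantive obstacle; the lemma is at heart a bookkeeping result aligning several ways of measuring `many pairwise incomparable futures'. The only mildly delicate point is the cardinality accounting in $3\Rightarrow 1$: the set $Y$ can be strictly larger than $\kappa$ when many of its elements share the same future, and selecting one representative per fiber via the axiom of choice is what simultaneously aligns $|T|$ with $\kappa$ and produces the $\di$-\ach\ subset demanded by condition 1.
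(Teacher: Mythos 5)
Your proof is correct and uses essentially the same ingredients as the paper's: enumerating an achronal set without repetition, noting that on a $\di$-achronal set the map $y\mapsto R^\c F_\di(y)$ is injective with pairwise $\subseteq$-incomparable images, and choosing one preimage per member of an antichain of futures. The paper merely arranges these steps as a single cycle $1\To2\To3\To4\To1$ rather than as pairwise equivalences, and it passes from condition~4 to the existence of an antichain of cardinality $\kappa$ with the same definition-unwinding you use in your $3\Leftrightarrow4$ step.
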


\begin{proof}
For $1\To2$, if $Y\subseteq S$ is $\di$-\ach\ and of cardinality $\kappa$,
let $(y_i:i<\kappa)$ enumerate $Y$ without repetitions. 
 This is a $\di$-\ach\ sequence
in $S$.

For $2\To3$, let $(y_i:i<\kappa)$ be a $\di$-\ach\ sequence in $S$.
Then $Y=\{y_i:i<\kappa\}$ satisfies~3.

$3\To4$ is trivial.
For $4\To1$, assume 4.
So there is 
 an $\subseteq$-antichain  $\boldsymbol{A}\subseteq R^\c F_\di[S]$ of cardinality $\kappa$.
For each $Z\in\boldsymbol{A}$, pick a point $y_Z\in S$ with $R^\c F_\di(y_Z)=Z$.
Then $\{y_Z:Z\in\boldsymbol{A}\}\subseteq S$ is \ach\ and of cardinality $\kappa$.
\end{proof}

The condition
`there is $Y\subseteq S$ of cardinality $\kappa$
such that 
$R^\c F_\di[Y]$ is an $\subseteq$-antichain'
 clearly follows from condition~3 in the lemma,
but is strictly weaker, as we may have $|R^\c F_\di[Y]|<\kappa$.
As an informal example, let $\c F=(\omega,R)$,
where  $R=\emptyset$.
Here, $\c F$ has no \ach\ subset with more than one element,
but $R[\omega]=\{\emptyset\}$ is trivially an $\subseteq$-antichain.

\subsection{The logic $KU_n^\M$}\label{ss:Un rmks}


\begin{definition}\label{def:Un}
Let $0<n<\omega$.
We write $KU_n^\M$ for the smallest normal modal
$\M$-logic containing the following set $U^\M_n$ of axioms:
\[
U_n^\M=\Big\{
\bigvee_{i\leq n}\bo\big(\bbo q_i\to\bigvee_{j\leq n,\,j\neq i}\bbo q_j\big)
\;:\;
\di,\bdi\in\M
\Big\}.
\]
For avoidance of doubt, $i$ and $j$ here denote ordinals in $\{0,\ldots,n\}$.

A normal modal $\M$-logic is said to be
of \emph{finite \ach\ width} if it contains $U^\M_n$ for some $n$ with $0<n<\omega$.
\end{definition}


\subsection{First-order correspondents of the $U_n^\M$}\label{sss:corresps}

The $U_n^\M$ axiom shown above
is equivalent to 
$(\bigwedge_{i}\di(\bbo q_i\wedge\neg\bigvee_{j\neq i}\bbo q_j))\to\bot$,
a Sahlqvist formula according to \cite[definition 3.51]{BRV:ml}.
Its  \fo\  correspondent
\cite[theorem 3.54]{BRV:ml} is
\[
\forall xy_0\ldots y_n\bigvee_{i\leq n}
\big[ R_\di xy_i\to\bigvee_{j\leq n,\,j\neq i}\forall z(R_\bdi y_iz\to R_\bdi y_jz)\big].
\]
The part following the $\forall xy_0\ldots y_n$
is the disjunction of the formulas
$\neg R_\di xy_i$, for each $i\leq n$, 
and $\forall z(R_\bdi y_iz\to R_\bdi y_jz)$, for each distinct $i,j\leq n$.
Reordering these to put the $\neg R_\di xy_i$ first, 
we see that the whole correspondent
is logically equivalent to
\begin{equation}\label{e:fo cor}
\forall xy_0\ldots y_n
\Big(
\Big(
\bigwedge_{i\leq n}R_\di xy_i
\Big)
\to \bigvee_{i,j\leq n,\>i\neq j}
\forall z(R_\bdi y_iz\to R_\bdi y_jz)
\Big).
\end{equation}
This is the form that we will use.
A frame validates $U_n^\M$ iff \eqref{e:fo cor} holds in the frame
for every $\di,\bdi\in\M$.

Informally, \eqref{e:fo cor} says that 
\emph{for any $R_\di$-successors $\Vec yn$ of some given world,
we have $R_\bdi(y_i)\subseteq R_\bdi(y_j)$ for some distinct $i,j\leq n$.}
That is, \emph{the sequence $(y_i:i\leq n)$ is not $\bdi$-\ach.}
So by  lemma~\ref{lem:pif}, \eqref{e:fo cor}~is equivalent to each of:
\begin{itemize}

\item every $\bdi$-\ach\ set contained in some $R_\di(x)$ has at most $n$ elements,


\item for each $x$, the $\subseteq$-width of $R_\bdi[R_\di(x)]$
is at most $n$.

\end{itemize}
Consequently, $U_n^\M$ is valid in precisely those frames
$\c F$ such that for each $x\in\c F$ and $\di\in\M$,
every \ach\ subset of $R^\c F_\di(x)$
has at most $n$ elements.

\subsection{Alternative formulation}
The axiom shown in definition~\ref{def:Un} can be
reformulated more in the style of \cite{+Fine74,Xu21}, as
\[
\Big(\bigwedge_{i\leq n}\di (p_i\wedge\bbo q_i)\Big)
\to\bigvee_{i,j\leq n,\,i\neq j}\di(p_i\wedge\bbo q_j),
\]
where $\Vec pn,\Vec qn\in\Q$ are pairwise distinct atoms.
This form is also Sahlqvist, with the same \fo\ correspondent,
and is therefore provably equivalent to the earlier axiom.
We give it in case it is more appealing.

\subsection{$U_1^\M$}\label{ss:U1}
To gain some familiarity with the $U_n^\M$,
let us quickly discuss the simplest case,
when $M=\{\di\}$ and $n=1$.
Then $U_1^\M=\{\bo(\bo q_0\to\bo q_1)\vee\bo(\bo q_1\to\bo q_0)\}$.
This is not a new axiom.
For example, in~\cite[lemma 3.2]{+vb78},
van Benthem
used  $\bo(\bo p\to\bo q)\vee\bo(\bo q\to\bo p)$ as part of a
 Kripke-incompleteness proof, and stated 
 an equivalent form of the correspondent~\eqref{e:fo cor} for it.
We will say a little more about this in~\S\ref{sec:kr comp}.

Plainly, every frame $\c F$ such that 
\begin{equation}\label{e:*}
(R^\c F_\di[\c F],\subseteq)
\mbox{ is a chain
---  ie.\ }
R^\c F_\di(x)\subseteq R^\c F_\di(y)
\mbox{ or }
R^\c F_\di(y)\subseteq R^\c F_\di(x)
\mbox{ for every } x,y\in\c F
\end{equation}
has no \ach\ subset with more than one element,
so satisfies~\eqref{e:fo cor} for $n=1$
and validates $U_1^\M$.
Indeed, it validates $U^\M_n$ for all $n\geq1$.

This provides a source of simple examples.
Any frame forming a transitive chain, such as $(\omega,<)$, satisfies \eqref{e:*},
but other frames do too.
For example, the irreflexive transitive frame $\c G_j$ shown in figure~\ref{fig:continuum}
below satisfies \eqref{e:*} and  validates $U_1^\M$,
but has an infinite $R$-antichain.

\subsection{Comparing the $U_n^\M$ with each other}\label{sss:Un chain}

Let  $K^\M$ denote the smallest normal modal $\M$-logic.
Plainly, \eqref{e:fo cor} for $n$
implies \eqref{e:fo cor} for $n+1$,
and it follows that
$KU_1^\M\supseteq KU_2^\M\supseteq\cdots\supseteq K^\M$.

We show that the inclusions are strict.
For any set $X$, let $\c F_X$ be the `lawn rake-like' frame 
with domain $\{a\}\cup X$, for some point $a\notin X$, and with
\begin{equation}\label{e:fan}
R_\di^{\c F_X}=
R=\{(a,x),(x,x):x\in X\}
\quad\mbox{for each }\di\in\M.
\end{equation}
The \ach\ subsets of $\c F_X$ are $\{a\}$ and the subsets of $X$.
These all have size at most $|X|$, if $X\neq\emptyset$.
So for $0<n<\omega$,
the frame $\c F_{n+1}$
validates $U^\M_{n+1}$.
But $\c F_{n+1}$ does not validate $U^\M_n$,
since  $n+1$ is an \ach\ set
with $>n$ elements contained in $R(a)$.
So the inclusion $KU_n^\M\supset KU_{n+1}^\M$ is strict.
We will show more in \S\ref{sss:KUn+1-KUn}.

Given any  modal $\M$-formula $\varphi\notin K^\M$,
filtrating the canonical model for $K^\M$
provides a finite Kripke model $\c M$ satisfying $\neg\varphi$.
Let $|\c M|=n$, say.
Then evidently, the frame of $\c M$ validates~$U_n^\M$.
Hence, $\varphi\notin KU_n^\M$.
We deduce that $\bigcap_{0<n<\omega}KU_n^\M=K^\M$.

Of course there are many normal $\M$-logics (containing $K^\M$ but) not containing any
$KU_n^\M$.
For instance, the logic of the lawn-rake frame $\c F_\omega$
 does not contain any $KU^\M_n$,
since $\omega$ is an infinite \ach\ subset of~$\c F_\omega$.

\subsection{Comparing the $U_n^\M$ with other logics}\label{ss:other logics}

Let us take a look at  the monomodal case, where $\M=\{\di\}$.
We will write $U^{\{\di\}}_n$ as simply $U_n$,
and $R_\di$ as just $R$.
For suitable $n$,
the logic   $KU_n$ weakens some known logics.
\begin{enumerate}
\item\label{i:U,1 52 and 5}
 $KU_1\subset K5_2\subset K5$,
where $K5_2$ and $K5$
are as described in \S\ref{sec: K5}.

As we said at the start of \S\ref{sec:KUn}, 
the correspondent
of $5_2$ (see~\eqref{n152} and~\eqref{e52})
says that \emph{all successors of a given world have the same $R$-future,}
and this implies~\eqref{e:fo cor} with $n=1$.
So every frame that validates $5_2$ also validates $U_1$.

Since $K5_2$ is Sahlqvist axiomatised, it is Kripke complete
\cite[theorem 4.42]{BRV:ml}.
So for each modal formula $\varphi$, if $\varphi\notin K5_2$
then $\neg\varphi$ is satisfied in some Kripke model whose
frame validates $5_2$.
By the above, this frame also validates $U_1$, 
and consequently $\varphi\notin KU_1$.
We deduce that $KU_1\subseteq K5_2$ as stated. 
The  inclusion $K5_2\subseteq K5$
was indicated in \S\ref{ss:K52 canon}.

The frame $(\omega,<)$
validates $U_1$, as we saw in \S\ref{ss:U1},
 but any two distinct points in it have different futures, so it does not validate $5_2$.
Hence, the inclusion $KU_1\subset K5_2$ is strict.
We saw in \S\ref{ss:continuum K52-K5} that the inclusion $K5_2\subset K5$ is 
(very) strict.

\item\label{i:Un and 4In}
 $KU_n\subset K4I_n$ for every $n$, where the latter is
as in Fine's paper \cite{+Fine74} and is the smallest normal modal logic
containing the Sahlqvist axioms
\begin{equation}\label{e:In corr}
\begin{array}{rcll}
4&=&\di\di p\to\di p &\mbox{(transitivity),}
\\
I_n&=&(\bigwedge_{i\leq n}\di q_i)\to\bigvee_{i,j\leq n,\,i\neq j}\di(q_i\wedge(q_j\vee\di q_j)).
\end{array}
\end{equation}
The \fo\ correspondent of $I_n$
says that \emph{no $R(x)$ contains an $R$-antichain 
with $>n$ points.}
Given this and transitivity,
if $\Vec yn\in R(x)$,
 there are $i\neq j$ with $y_i=y_j$ or $Ry_iy_j$,
and hence (by transitivity) with $R(y_j)\subseteq R(y_i)$.
So~\eqref{e:fo cor} holds for $n$,
and hence, every frame validating $4I_n$ also validates $U_n$.
Again, $K4I_n$ is Sahlqvist and so Kripke complete,
and it follows that $KU_n\subseteq K4I_n$.

The inclusion $KU_n\subset K4I_n$ is  proper for each $n$,
since frames such as 
$
(\omega,\{(i,i+1):i<\omega\})
$
validate  $U_n$ but not even $K4$.
We will see transitive examples in \S\ref{sss:KUn-K4In gap}.

\item The lawn-rake frame $\c F_{n+1}$ from \S\ref{sss:Un chain}
validates $K4I_{n+1}$, since it is transitive and its only subset
$\{a\}\cup(n+1)$ with  $>n+1$ points is not an $R$-antichain 
--- see~\eqref{e:fan}.
But as we said, it does not validate $U_{n}$.
Hence, $KU_n\not\subseteq K4I_{n+1}$.

\item\label{i:K4 and K5}
Rather trivially,
$K4\not\subseteq K5$, since
the frame 
$(\{0,1,2\},\{(0,1)\}\cup\{1,2\}^2)$
validates $K5$ but is not transitive.
Also, $K5_2\not\subseteq K4I_1$,
since $(\omega,<)$ validates $K4I_1$ but (as we saw in item~\ref{i:U,1 52 and 5})
 not~$5_2$.

\end{enumerate}
The situation is therefore as shown in figure~\ref{fig:logics}.
\begin{figure}
\begin{center}
\begin{tikzcd}[column sep=25pt, row sep = 30pt]

K4\arrow[r, "\subset", phantom]
&\cdots\arrow[r, "\subset", phantom]
&K4I_2\arrow[r, "\subset", phantom]
&K4I_1
\\
K\arrow[u, "\subset"rotate=90, phantom]\arrow[r, "\subset", phantom]
&\cdots\arrow[r, "\subset", phantom]
&KU_2\arrow[u, "\subset"rotate=90, phantom]\arrow[r, "\subset", phantom]
&KU_1\arrow[u, "\subset"rotate=90, phantom]
\arrow[r, "\subset", phantom]
&K5_2\arrow[r, "\subset", phantom]
&K5

\end{tikzcd}
\end{center}

\caption{the logics $KU_n$, $K4I_n$, $K5_2$, and $K5$ (monomodal case)}\label{fig:logics}
\end{figure}
No inclusions can be added
that do not
already follow from the ones shown by transitivity of inclusion.
\begin{enumerate}

\setcounter{enumi}{4}

\item \label{i:xu}
$U_n$ is somewhat related to Xu's axiom $\mathsf{Wid}_n^*$ \cite[p.1178]{Xu21},
whose correspondent says that 
\emph{for every $x$,
every $R$-{antichain}
with $>n$ points in $R(x)$
contains  distinct points $y,y'$
with the same proper future} ---
that is,
$\forall z(R^\bullet yz 
\leftrightarrow R^\bullet y'z)$,
where $R^\bullet yz$ abbreviates $R yz\wedge\neg R zy$.
This clearly bears some similarity to~\eqref{e:fo cor}, 
but the two are independent even with transitivity.
The  lawn-rake frame $\c F_\omega$ from \S\ref{sss:Un chain} 
is transitive  and validates all $K4\mathsf{Wid}_n^*$, but no $U_n$.
Now let $K4U_n$ be the smallest normal modal logic containing $U_n$ and the 
transitivity axiom 4.
The transitive frame
$(W,R)$,
where  $W=\{a\}\cup(\omega\times2)$,
 $a\notin\omega\times2$,  and
$R=\{(a,(i,k)),((i,k),(j,1)):j\leq i<\omega,\; k<2\}$,
validates $K4U_1$ (by virtue of~\eqref{e:*}), and so all
$K4U_n$,
but no $\mathsf{Wid}_n^*$, since no two points in the
$R$-antichain $\omega\times\{0\}$ have the same proper successors.

\end{enumerate}

\subsection{$S4U_n=S4I_n$}\label{ss:S4In=S4Un}

There is another connection between $U_n$ and $I_n$.
For a binary relation $R$ on a set $W$,
let $\o R$ be the relation on $W$ defined
by $\o Rxy$ iff $R(y)\subseteq R(x)$.
It is an exercise to verify that  
(i) an $R$-\ach\ set (defined in the obvious way)
is the same thing as an $\o R$-antichain;
(ii) $\o R$ is reflexive and transitive;
(iii) $\o R\subseteq R$ iff $R$ is reflexive;
(iv) $R\subseteq\o R$ iff $R$ is transitive.
By (ii)--(iv), we get (v) $\setbox0\hbox{$\!R$}\wd0=7pt\overline{\overline{\box0}}=\o R$.

However, (vi) $\o{\,\cdot\,}$ is not monotonic
with respect to inclusion, even on transitive relations.
For suppose $|W|\geq2$ and $R=\{(x,x):x\in W\}$ (identity).
Then $\emptyset$ and $R$ are  transitive and $\emptyset\subseteq R$.
But $R$ is also reflexive, so by (iii) and (iv), $\o R=R$.
So $\o\emptyset=W\times W\not\subseteq\o R$.

Lastly, and getting to the point,
(vii) if $\c F=(W,R)$ is rooted\footnote{`Rooted' means that
$\c F=\c F(w)$ for some $w\in W$.
The assumption is needed for $\To$.
If $a\notin\omega\times2$,
$W=\{a\}\cup(\omega\times2)$, and 
$R=\{(a,(i,1)),\,((i,0),(i,1)):i<\omega\}$,
then $R$ is transitive and $\c F\models U_1$.
But 
$\omega\times\{0\}$ is an infinite $\o R$-antichain
contained in $\o R(a)=W$, so $\o{\c F}\not\models I_n$ for any $n$.
The frame $\o{\c F}$ is  rooted, but ${\c F}$ is not.}
 and transitive, and $\o{\c F}=(W,\o R)$,
then $\c F$ validates $U_n$ iff $\o{\c F}$ validates $I_n$, for each $0<n<\omega$.

It follows from (iii), (iv), and (vii)  that $S4U_n=S4I_n$,
where these are the smallest
normal modal logics containing
 4, the reflexivity axiom $ p\to\di p$,
 and $U_n$ (respectively,~$I_n$).

\subsection{Continuum-many logics in intervals in figure~\ref{fig:logics}}

Here we remain in the monomodal case:
$\M=\{\di\}$.
We still write $U^{\{\di\}}_n$ as  $U_n$
and $R_\di$ as  $R$.

We saw in \S\ref{ss:continuum K52-K5} that there are continuum-many
normal modal logics
between $K5_2$ and $K5$.
We will now briefly indicate
that the same holds for many
of the other intervals  in figure~\ref{fig:logics}.

\subsubsection{Continuum-many logics between $K4U_n$ and $K4I_n$}
\label{sss:KUn-K4In gap}

 Fix  $n$ with $0<n<\omega$.
Then
there are continuum-many normal monomodal logics between $K4U_n$
and $K4I_n$.

The proof is similar to the one 
in \S\ref{ss:continuum K52-K5}.
For each $j<\omega$ we
define the irreflexive transitive frame
$\c G_j=(j+1\cup\{a_l:l<\omega\},R_j)$,
where the $a_l$ are  \pdt\ and not in $j+1$, and
\[
R_j
=
\{(k,m):k<m\leq j\}
\cup
\{(0,a_l),\,(a_l,m)
\;:\;
l<\omega,\;1\leq m\leq j\}.
\]
See figure~\ref{fig:continuum}.
\begin{figure}
\begin{center}

\tikzset{->-/.style={decoration={
  markings,
  mark=at position #1 with {\arrow{>}}},postaction={decorate}}}

\begin{tikzpicture}[ scale=1, >=Stealth, 
V/.style={},
e/.style={}]

\node (0) at (0,0) [V]  {$0\;$} ;
\node (a0) at (2,1.4) [V]  {$a_0$} ;
\node (a1) at (2,.7) [V]  {$a_1$} ;
\node (a2) at (2,0) [V]  {$a_2$} ;
\node (a3) at (2,-.7) [V]  {$a_3$} ;
\node (vetc) at (2,-1.5) [e]  {\raise12pt\hbox{$\vdots$}} ;

\node (1) at (4,0) [V]  {$\;1\;$} ;
\node (2) at (6,0) [V]  {$\;2\;$} ;
\node (etc) at (8,0) [e]  {$\cdots$} ;
\node (j) at (10,0) [V]  {$\;j$} ;

\draw[->-=.6]  (0) to  (a0);
\draw[->-=.6]  (0) to  (a1);
\draw[->-=.6]  (0) to  (a2);
\draw[->-=.6]  (0) to  (a3);
\draw[->-=.6]  (0) to  (vetc);

\draw[->-=.6]  (a0) to  (1);
\draw[->-=.6]  (a1) to  (1);
\draw[->-=.6]  (a2) to  (1);
\draw[->-=.6]  (a3) to  (1);
\draw[->-=.6]  (vetc) to  (1);

\draw[->-=.6]  (1) to  (2);
\draw[->-=.6]  (2) to  (etc);
\draw[->-=.6]  (etc) to  (j);

\end{tikzpicture}
\vskip-12pt
\caption{irreflexive transitive frame $\c G_j$ 
for \S\ref{sss:KUn-K4In gap} and \S\ref{sss:KU1-K52 gap}}
\label{fig:continuum}
\end{center}
\end{figure}
From~\eqref{e:*} in \S\ref{ss:U1},
we see
that $U_1$, and hence  $U_n$, are valid in each $\c G_j$.
Let $\c I_n$ be the class of frames that validate $K4I_n$,
and
for each $J\subseteq\omega$,
 let $L_J$ be the (normal monomodal) logic of the class 
 $\c C_J=\c I_n\cup\{\c G_j:j\in J\}$ of frames.
 For each $i<\omega$,
let $\alpha_i=\di^i\top\wedge\neg\di^{i+1}\top$
as in \S\ref{ss:continuum K52-K5},
and  
\[
\psi_i= I_n\vee\bo(\alpha_i\to p)\vee\bo(\alpha_i\to\neg p),
\]
where $I_n$ is as in~\eqref{e:In corr}
and $p$ is an atom not occurring in $I_n$.
It can be verified that
 \begin{enumerate}
\item 
$K4U_n\subseteq L_J\subseteq K4I_n$
(because $K4U_n$ is valid in $\c C_J$, and $\c I_n\subseteq\c C_J$),

\item $\psi_i$ is valid in $\c I_n$  for each $i<\omega$ (because
its disjunct $I_n$ is valid in $\c I_n$),

\item $I_n$ is not valid at 0 in any $\c G_j$
(because of the infinite $R_j$-antichain $\{a_l:l<\omega\}$),

\item $\psi_i$ is valid in $\c G_j$ iff $i\neq j$, for each $i,j<\omega$
(proved as in \S\ref{ss:continuum K52-K5}, using point 3).

\item Hence,  $\psi_i\in L_J$ iff $i\notin J$, for each $i<\omega$ and $J\subseteq\omega$.

\end{enumerate}
So if $I,J\subseteq\omega$ and 
$i\in I\setminus J$, then $\psi_i\in L_J\setminus L_I$.
We conclude that the $L_J$ ($J\subseteq\omega$)
are pairwise distinct.

\subsubsection{Continuum-many logics between $KU_1$ and $K5_2$}%
\label{sss:KU1-K52 gap}

There are also continuum-many normal modal logics
lying between $KU_1$ and $K5_2$.
Let $\c K$ be the class of frames validating $K5_2$,
and for $J\subseteq\omega\setminus\{0\}$, let $L_J$
be the logic of the class
$\c K\cup\{\c G_j:j\in J\}$,
where the $\c G_j$ are as in figure~\ref{fig:continuum} again.
For $j\geq1$, the points $a_0,1\in R_j(0)$ have different futures, so
$\c G_j\not\models 5_2$ and  $\c G_j\notin\c K$.
However,  $\c G_0\in\c K$, so we exclude $0$ from $J$.

Using 
$\xi_i=5_2\vee\bo(\alpha_i\to p)\vee\bo(\alpha_i\to\neg p)$ this time,
it can be checked that
$KU_1\subseteq L_J\subseteq K5_2$
and that the $L_J$
are \pdt.

\subsubsection{Continuum-many logics between $KU_{n+1}$ and $KU_n$}\label{sss:KUn+1-KUn}

\begin{figure}
\begin{center}

\tikzset{->-/.style={decoration={
  markings,
  mark=at position #1 with {\arrow{>}}},postaction={decorate}}}

\begin{tikzpicture}[ scale=1, >=Stealth, 
V/.style={},
e/.style={}]

\node (0) at (0,0) [V]  {$j+1$} ;

\node (a0) at (2,1.4) [V]  {$r_0$} ;
\node (a1) at (2,.7) [V]  {$r_1$} ;
\node (a2) at (2,0) [V]  {$r_2$} ;
\node (vetc) at (2,-.7) [e]  {\raise4pt\hbox{$\vdots$}} ;
\node (an) at (2,-1.4) [V]  {$r_n$} ;

\node (j'') at (4,1.4) [V]  {$j''$} ;
\node (j') at (4,-1.4) [V]  {$j'$} ;

\node (1) at (6,0) [V]  {$j-1$} ;
\node (2) at (8,0) [V]  {$j-2$} ;
\node (etc) at (10,0) [e]  {$\;\cdots\;$} ;
\node (j) at (11.5,0) [V]  {$\;1\;$} ;
\node (j+) at (13,0) [V]  {$\;0$} ;


\draw[->-=.6]  (0) to  (a0);
\draw[->-=.6]  (0) to  (a1);
\draw[->-=.6]  (0) to  (a2);
\draw[->-=.6]  (0) to  (an);

\draw[->-=.7]  (a0) to  (j');
\draw[->-=.65]  (a1) to  (j');
\draw[->-=.6]  (a2) to  (j');
\draw[->-=.6]  (an) to  (j');

\draw[->-=.6]  (a0) to  (j'');
\draw[->-=.6]  (a1) to  (j'');
\draw[->-=.6]  (a2) to  (j'');
\draw[->-=.7]  (an) to  (j'');

\draw[->-=.6]  (j') to  (1);
\draw[->-=.6]  (j'') to  (1);

\draw[->-=.6]  (1) to  (2);
\draw[->-=.6]  (2) to  (etc);
\draw[->-=.6]  (etc) to  (j);
\draw[->-=.6]  (j) to  (j+);

\end{tikzpicture}

\caption{transitive frame $\c E_j^n$ for \S\ref{sss:KUn+1-KUn}}
\label{fig:Un/Un+1}

\end{center}
\end{figure}

Finally, for each $0<n<\omega$,
there are continuum-many normal modal logics
 between $KU_{n+1}$ and $KU_n$. 
For $J\subseteq\omega$, let
$L_J$ be the logic
of $\c U_n\cup\{\c E_j^n:j\in J\}$,
where
$\c U_n$ is the class of frames validating $U_n$,
and $\c E_j^n$ is the transitive frame shown in figure~\ref{fig:Un/Un+1}.
This frame validates $U_{n+1}$.
What is not clear from the figure 
is that $\Vec rn$ are reflexive, the only reflexive points in the frame.
They therefore form an $(n+1)$-point \ach\ set,
and so $\c E_j^n$ does not  validate~$U_n$.

The construction of the $L_J$ this time uses
$\zeta_i=U_n\vee\bo(\alpha_i\to p)\vee\bo(\alpha_i\to\neg p)$,
noting that $\alpha_i$ is never true at $\Vec rn$ since these points are reflexive.
Again,
$KU_{n+1}\subseteq L_J\subseteq KU_n$,
and the $L_J$ are \pdt.
We leave details to the reader.

\section{Kripke incompleteness}\label{sec:kr comp}

Here we consider Kripke completeness (or as it turns out, the lack of it)  of 
finite-\ach-width logics (ie.\ normal extensions of some $KU_n^\M$).
Our starting point is the result of
Fine \cite{+Fine74} that all finite-width logics
(those extending some $K4I_n$) are Kripke complete.
We might ask whether the same holds for finite-\ach-width logics.

First, Fine's result does not
extend to multimodal logics.
For $\M=\{\di,\bdi\}$,
Thomason's normal temporal logic given in~\cite{Tho72}
(see also
\cite[theorem 4.49]{BRV:ml},
\cite[exercise 6.23]{ChagZak:ml}, and \cite[pp.55--56]{Gol87}),
extended by $I_1$ for both diamonds,
is not Kripke complete and indeed is not valid in any Kripke frame.
Since this logic contains $U_1^\M$,
not all normal extensions of even the bimodal $K4U_1^{\{\di,\bdi\}}$ are Kripke complete.

So we will restrict our attention to the monomodal case, where $\M=\{\di\}$.
As usual, we write $U^{\{\di\}}_n$ simply as  $U_n$.
In this case, we can at least say that
all normal extensions of each $S4U_n$
are Kripke complete, by Fine's result \cite{+Fine74} and
since
$S4U_n=S4I_n\supseteq K4I_n$
(see \S\ref{ss:S4In=S4Un}).

This can fail for smaller logics.
Van Benthem \cite{+vb78}
showed that the smallest normal logic
containing the axioms
$T=p\to\di p$,
$M=\bo\di p\to\di\bo p$,
$Q=\di p\wedge\bo(p\to\bo p)\to p$,
and $U_1$
is Kripke incomplete.
See also \cite[exercise 4.4.4]{BRV:ml}.
In fact, using work of Blok \cite{+blok80},
it can be shown that  $KTU_1$  has continuum-many  Kripke-incomplete
normal extensions.

\subsection{Extensions of $K4U_2$}

These latter extensions do not contain the transitivity axiom 4.
What about extensions that do?
As we will see,
the general Kripke completeness of logics containing any $K4I_n$ does not extend to those containing any $K4U_n$. We now show that there is a Kripke-incomplete normal extension of $K4U_2$. The proof builds on Fine's construction in  \cite{+Fine74c} of an incomplete logic containing S4, which we transfer to an extension of K4.

 The means of doing this is a formula translation $\phi\mapsto \phi\ci$ that is defined inductively by putting $\phi\ci=\phi$ if $\phi$ is an atom or constant, letting  the translation commute with the Boolean connectives, i.e.\ $(\neg\phi)\ci= \neg(\phi\ci)$ etc., and putting $(\di\phi)\ci=\phi\ci\lor\di(\phi\ci)$. Then $(\bo\phi)\ci=\phi\ci\land\bo(\phi\ci)$.
Introducing the  abbreviations  $\di\ci\phi$ for $\phi\lor\di\phi$,  and $\bo\ci\phi$ for $\phi\land\bo\phi$,
 we have  
$(\di\phi)\ci=\di\ci(\phi\ci)$ and $(\bo\phi)\ci=\bo\ci(\phi\ci)$.

 For any frame $\c F=(W,R)$, let  $\c F\ci=(W,R\ci)$, where  $R\ci=R\cup\{(x,x):x\in W\}$, the `reflexive closure' of $R$.
 If $\c M=(\c F,V)$ is any model on $\c F$, let   $\c M\ci=(\c F\ci,V)$.  The definition of $R\ci$ ensures that 
 \begin{equation}\label{split}
 \begin{split}
\c M,x\models \di\ci\phi &\text{\ iff $\exists y(xR\ci y$ and $\c M,y\models\phi$), and}
 \\
\c M,x\models \bo\ci\phi &\text{\ iff $\forall y(xR\ci y$ implies $\c M,y\models\phi$).}
\end{split}
 \end{equation}
 
 \begin{lemma}  \label{Mcirc}  
 \begin{enumerate}[\rm (1)]
 \item
For any formula $\phi$ and any $x$ in $W$, $\c M\ci,x\models\phi$ iff  $\c M,x\models\phi\ci$. 
\item
 $\c M\ci\models\phi$ iff  $\c M\models\phi\ci$.
 \item
  $\c F\ci\models\phi$ iff  $\c F\models\phi\ci$.
\end{enumerate}
\end{lemma}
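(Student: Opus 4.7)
The plan is to prove part~(1) by induction on the construction of $\phi$, and then derive parts~(2) and~(3) as essentially formal consequences.

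For part~(1), the atomic and constant cases are immediate: $\phi^\circ=\phi$ when $\phi\in\Q\cup\{\top\}$, and $\c M$, $\c M^\circ$ share the valuation $V$ and domain $W$. The Boolean cases are routine because $\phi\mapsto\phi^\circ$ commutes with $\neg$ and $\vee$, and truth of Booleans in a Kripke model depends only on the truth values of the immediate subformulas at the given world. The only substantive case is $\phi=\di\psi$. Here I would compute, using \eqref{split} applied to $\c M$ (and the abbreviation $\di^\circ\chi\equiv\chi\vee\di\chi$):
\[
\c M^\circ,x\models\di\psi
\iff
\exists y\,(xR^\circ y \text{ and }\c M^\circ,y\models\psi)
\stackrel{\text{IH}}\iff
\exists y\,(xR^\circ y \text{ and }\c M,y\models\psi^\circ)
\iff
\c M,x\models\di^\circ(\psi^\circ),
\]
and since $(\di\psi)^\circ=\di^\circ(\psi^\circ)$ by the definition of the translation, this gives the required equivalence. (One could alternatively do the $\bo$-case explicitly using the second line of \eqref{split} and $(\bo\psi)^\circ=\bo^\circ(\psi^\circ)$, but with $\bo$ taken as abbreviation via $\neg\di\neg$, the $\di$-case plus the Boolean clauses already covers it.)

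Part~(2) follows at once from part~(1): since $\c M$ and $\c M^\circ$ share the domain $W$, we have $\c M^\circ\models\phi$ iff $\c M^\circ,x\models\phi$ for all $x\in W$ iff (by~(1)) $\c M,x\models\phi^\circ$ for all $x\in W$ iff $\c M\models\phi^\circ$. Part~(3) follows from part~(2) by quantifying over valuations: the frames $\c F$ and $\c F^\circ$ have the same underlying set $W$, so the set of valuations into $\c F$ coincides with the set of valuations into $\c F^\circ$. Hence $\c F^\circ\models\phi$ iff $(\c F^\circ,V)\models\phi$ for every $V$ iff (by~(2)) $(\c F,V)\models\phi^\circ$ for every $V$ iff $\c F\models\phi^\circ$.

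There is no real obstacle to this proof; the only delicate point is to make sure that the translation clauses for $\di$ and $\bo$ interact correctly with~\eqref{split}, i.e.\ that the clauses $(\di\phi)^\circ=\di^\circ(\phi^\circ)$ and $(\bo\phi)^\circ=\bo^\circ(\phi^\circ)$, together with the two equivalences in \eqref{split}, are precisely what is needed to transfer truth across the change from $R$ to $R^\circ$.
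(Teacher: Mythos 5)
Your proof is correct and follows essentially the same route as the paper's: part~(1) by induction on $\phi$ with the $\di$-case handled via \eqref{split} and the identity $(\di\psi)\ci=\di\ci(\psi\ci)$, part~(2) by pointwise application of~(1) over the shared domain, and part~(3) by observing that models on $\c F$ and on $\c F\ci$ share the same valuations so that every model on $\c F\ci$ is of the form $\c M\ci$. The only difference is presentational: the paper writes out the two directions of~(3) separately, while you quantify over valuations in one chain of equivalences.
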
 

\begin{proof}
 \begin{enumerate}[\rm (1)]
 \item
By induction on formation of formulas. The crucial case is that of a formula $\di\phi$ under the hypothesis that the result holds for $\phi$. Then
$\c M\ci,x\models\di\phi$
iff there is some $y$ with $xR\ci y$ and $\c M\ci,y\models\phi$, which by the hypothesis holds iff there is some $y$ with $xR\ci y$ and $\c M,y\models\phi\ci$. The latter is equivalent by \eqref{split} to  $\c M,x\models \di\ci(\phi\ci)$, i.e.\ to
$\c M,x\models (\di\phi)\ci$.

\item
Follows from (1).
\item
Let   $\c F\ci\models\phi$. Then for any model $\c M$ on $\c F$ we have $\c M\ci\models\phi$, hence $\c M\models\phi\ci$ by (2). This shows $\c F\models\phi\ci$.
Conversely, assume $\c F\models\phi\ci$. Let  $\c M'=(\c F\ci,V)$ be any model on $\c F\ci$. Put $\c M=(\c F,V)$. Then $\c M\models \phi\ci$,   so   $\c M\ci \models \phi$  by (2). But $\c M\ci=\c M'$, so this shows that $\phi$ is verified by every model on $\c F\ci$.
 \qedhere
\end{enumerate} 
\end{proof}

Fine defines certain formulas $E,G,H$ and specifies $L$ to be the smallest normal extension of S4 to contain $G$ and $H$. He shows that
\begin{equation}  \label{Lvalid}
\text{any frame that validates $L$ also validates $\neg E$,}
\end{equation}
and then shows that $\neg E$ is not in $L$.

\begin{lemma}   \label{LnotE}
Let $L'$  be any  normal extension of $K4$ that contains $G\ci$ and $H\ci$.
Any frame that validates $L'$ also validates $\neg E\ci$.
\end{lemma}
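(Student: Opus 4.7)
The plan is to reduce the claim for $L'$ over $K4$ to Fine's original result for $L$ over S4, using the reflexive-closure trick encoded by the $\ci$ translation and Lemma~\ref{Mcirc}(3).

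First, I would take an arbitrary frame $\c F=(W,R)$ that validates $L'$ and consider its reflexive closure $\c F\ci=(W,R\ci)$. Since $L'$ extends $K4$, the axiom~4 is valid in $\c F$, so $R$ is transitive; I would then verify (by a short case-split on whether each step in a two-step $R\ci$-path is a genuine $R$-step or an identity step) that $R\ci$ is transitive as well. By construction $R\ci$ is also reflexive, so $\c F\ci$ validates S4.

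Next I would transfer the two extra axioms. By assumption $\c F\models G\ci$ and $\c F\models H\ci$, and Lemma~\ref{Mcirc}(3) immediately gives $\c F\ci\models G$ and $\c F\ci\models H$. Combining with the previous paragraph, $\c F\ci$ validates S4 together with $G$ and $H$. Since the set of formulas valid in a frame is a normal modal logic, and $L$ is by definition the \emph{smallest} normal extension of S4 containing $G$ and $H$, we get $\c F\ci\models L$.

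Now I would invoke Fine's property~\eqref{Lvalid}: any frame validating $L$ also validates $\neg E$, so $\c F\ci\models\neg E$. Applying Lemma~\ref{Mcirc}(3) once more, this time in the direction $\c F\ci\models\phi\Rightarrow \c F\models\phi\ci$ with $\phi=\neg E$, and noting that the translation $(\cdot)\ci$ commutes with negation so $(\neg E)\ci=\neg E\ci$, we conclude $\c F\models\neg E\ci$, as required. There is no serious obstacle here; the only points to be careful about are the transitivity of $R\ci$ and the observation that frame-validity is closed under the normal modal logic operations (modus ponens, generalisation, substitution), which is what lets us jump from ``$\c F\ci$ validates the axioms of $L$'' to ``$\c F\ci$ validates all of $L$''.
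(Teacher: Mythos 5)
Your proposal is correct and follows essentially the same route as the paper's own proof: pass to the reflexive closure $\c F\ci$, note it is an S4-frame validating $G$ and $H$ by Lemma~\ref{Mcirc}(3), conclude $\c F\ci\models L$ and hence $\c F\ci\models\neg E$ by~\eqref{Lvalid}, then transfer back via Lemma~\ref{Mcirc}(3). The extra details you supply (transitivity of $R\ci$, commutation of the translation with negation) are fine and merely make explicit what the paper leaves implicit.
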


\begin{proof}
Let $\c F\models L'$.  We show that $\c F\ci\models L$.
First, $\c F$ is transitive as it validates 4, so $\c F\ci$ is an S4-frame. 
Also $\c F \models G\ci \land H\ci$, hence 
$\c F\ci \models G\land H$  by Lemma \ref{Mcirc}(3).
Thus  the logic determined by  $\c F\ci$ includes $L$, i.e.\ $\c F\ci\models L$.

It follows by the result \eqref{Lvalid} that  $\c F\ci\models \neg E$. Hence by Lemma \ref{Mcirc}(3),   $\c F\models \neg E\ci$.
\end{proof}

The proof in  \cite{+Fine74c} that $\neg E$ is not in $L$ involves constructing a model that falsifies $\neg E$ but verifies all substitution instances of the axioms of $L$, hence verifies $L$. The frame of this model can be depicted as in Figure \ref{fineframe} (cf.~\cite[fig.~6.5]{ChagZak:ml}).
\begin{figure}
\begin{center}

\tikzset{->-/.style={decoration={
  markings,
  mark=at position #1 with {\arrow{>}}},postaction={decorate}}}

\begin{tikzpicture}[
scale=1.1, >=Stealth, 
V/.style={circle, fill=blue!0,  draw=blue!0, minimum size=22pt, inner sep=0pt}
]

\node (b0) at (0,4.5) [V]  {$b_0$} ;
\node (b1) at (2,4.5) [V]  {$b_1$} ;
\node (b2) at (4,4.5) [V]  {$b_2$} ;
\node (b3) at (6,4.5) [V]  {$b_3$} ;
\node (b4) at (8,4.5) [V]  {$b_4$} ;
\node (b5) at (10,4.5) [V]  {$b_5$} ;
\node (b6) at (11.2,4.5) []  {$\cdots$} ;

\node (c0) at (0,3) [V]  {$c_0$} ;
\node (c1) at (2,3) [V]  {$c_1$} ;
\node (c2) at (4,3) [V]  {$c_2$} ;
\node (c3) at (6,3) [V]  {$c_3$} ;
\node (c4) at (8,3) [V]  {$c_4$} ;
\node (c5) at (10,3) [V]  {$c_5$} ;
\node (c6) at (11.2,3) []  {$\cdots$} ;

\node (a0) at (4,1.5) [V]  {$a_0$} ;
\node (a1) at (6,1.5) [V]  {$a_1$} ;
\node (a2) at (8,1.5) [V]  {$a_2$} ;
\node (a3) at (10,1.5) [V]  {$a_3$} ;
\node (a4) at (11.2,1.5) []  {$\cdots$} ;

\node (d0) at (4,0) [V]  {$d_0$} ;
\node (d1) at (6,0) [V]  {$d_1$} ;
\node (d2) at (8,0) [V]  {$d_2$} ;
\node (d3) at (10,0) [V]  {$d_3$} ;
\node (d4) at (11.2,0) []  {$\cdots$} ;


\draw[->-=.7]  (b6) to  (b5);
\draw[->-=.6]  (b5) to  (b4);
\draw[->-=.6]  (b4) to  (b3);
\draw[->-=.6]  (b3) to  (b2);
\draw[->-=.6]  (b2) to (b1); 
\draw[->-=.6]  (b1) to  (b0);

\draw[->-=.7]  (c6) to  (c5);
\draw[->-=.6]  (c5) to  (c4);
\draw[->-=.6]  (c4) to  (c3);
\draw[->-=.6]  (c3) to  (c2);
\draw[->-=.6]  (c2) to (c1); 
\draw[->-=.6]  (c1) to  (c0);

\draw[->-=.7] (b2) to (c0); 
\draw[->-=.7] (c2) to (b0); 
\draw[->-=.7] (b3) to (c1); 
\draw[->-=.7] (c3) to (b1); 
\draw[->-=.7] (b4) to (c2); 
\draw[->-=.7] (c4) to (b2); 
\draw[->-=.7] (b5) to (c3); 
\draw[->-=.7] (c5) to (b3);

\draw[->-=.65]  (d0) -- (a0);
\draw[->-=.65]  (d1) to (a1);
\draw[->-=.65]  (d2) to (a2);
\draw[->-=.65]  (d3) to (a3);
\draw[->-=.65]  (d0) to  (d1);
\draw[->-=.65]  (d1) to  (d2);
\draw[->-=.65]  (d2) to (d3); 
\draw[->-=.75]  (d3) to  (d4);

\draw[->-=.32]  (a0) to [in=295,out=135]  (b1);
\draw[->-=.55]  (a0) to [in=310,out=155]  (c1);
\draw[->-=.32]  (a1) to [in=295,out=135] (b2);
\draw[->-=.55]  (a1) to  [in=310,out=155] (c2);
\draw[->-=.32]  (a2) to [in=295,out=135] (b3);
\draw[->-=.55]  (a2) to  [in=310,out=155] (c3);
\draw[->-=.32]  (a3) to [in=295,out=135] (b4);
\draw[->-=.55]  (a3) to  [in=310,out=155] (c4);

\draw[->-=.52]  (d4) to  [in=0,out=0] (b6);
\draw[->-=.7]  (d4) to  [in=0,out=5] (c6);

\end{tikzpicture}

\caption{An irreflexive transitive frame validating $U_2$ but not $U_1$}
\label{fineframe}
\end{center}
\end{figure}
The labelling gives a four-colouring, partitioning the nodes into $a$-points, $b$-points etc. The frame  relation is depicted by the arrows. Note that the $a$-points form an infinite antichain, and the $d$-points form an infinite ascending $R$-chain, the only one in the frame other than subsequences of the $d$-points. The frame is point-generated by $d_0$.

In \cite{+Fine74c} the relation was taken to be the reflexive transitive relation generated by the displayed arrows. It is a partial ordering. Here we do not need the relation to be reflexive. For the rest of this section, let $\c F=(W,R)$ be the irreflexive transitive version of this frame.  Then Fine's frame is $\c F\ci$. The relation $R$, which is asymmetric, can be specified by listing all the future sets:

\bigskip
$R(b_0)=R(c_0)=\emptyset$,

$R(b_1)=\{b_0\}$,\enspace  $R(c_1)=\{c_0\}$,

$R(b_{m+2})=\{b_0,\dots, b_{m+1}, c_0,\dots,c_m\}$,

$R(c_{m+2})=\{b_0,\dots, b_{m}, c_0,\dots,c_{m+1}\}$,

$R(a_{m})=\{b_0,\dots, b_{m+1}, c_0,\dots,c_{m+1}\}$,

$R(d_{m})=\{d_n:m<n\}\cup\bigcup\{R\ci(a_n):m\leq n\}$.

\bigskip\noindent
$\c F$ fails to validate $U_1$ in infinitely many ways. For instance, it contains the pair $(b_1,c_1)$ which is achronal as their futures $\{b_0\}$ and $\{c_0\}$ are $\subseteq$-incomparable. Similarly, every pair $(b_{m+1},c_{m+1})$ is achronal.

On the other hand $\c F$ validates $U_2$, since it has no triple of points that is achronal. To see why, note that the sequence
$(R(b_m):m<\omega)$ of future sets of $b$-points increases monotonically under $\subseteq$ as $m$ increases, so any two such sets are comparable. Likewise the $R(c_m)$'s are increasing, and so are the $R(a_m)$'s, while the $R(d_m)$'s are decreasing. So any two points of the same colour have comparable futures, hence cannot both belong to an achronal triple.

Also, a $d$-point cannot belong to any achronal triple, as the future of any $a$, $b$ or $c$ point consists entirely of $b$ and $c$ points, while the future of any $d$ point includes all of the $b$ and $c$ points. Hence the future of any $d$ point is comparable to that of any other point. 

That leaves the only possibility for an achronal triple being that it consists of one point of each of the colours $a,b,c$.
However $R(a_m)\supseteq R(b_{m+2})\supseteq R(b_{m+1})\supseteq\cdots\supseteq R(b_{0})$, while  
$R(a_m)\subseteq R(b_{m+k})$ for all $k\geq 3$. Hence the future of any $a$ point is comparable to that of any $b$ point (and similarly of any $c$ point). So this possibility is ruled out, and no achronal triple exists.

In  \cite{+Fine74c}, a model $(\c F\ci,V)$ is defined that satisfies $E$ at $d_0$. Let $\c M=(\c F,V)$ be the model on $\c F$ with the same valuation, so that $(\c F\ci,V)$ is $\c M\ci$, and $\c M\ci\not\models \neg E$. To prove that $\c M\ci$ verifies $L$ it was shown that $\c F\ci\models G$, and that $\c M\ci$ \emph{strongly verifies} $H$, i.e.\ it verifies every substitution instance of $H$. Thus all axioms of $L$ are strongly verified by 
 $\c M\ci$. But the rules of inference preserve strong verification, so  $\c M\ci$ (strongly) verifies $L$.

By Lemma \ref{Mcirc}, from  $\c M\ci\not\models \neg E$ we get
$\c M\not\models \neg E\ci$, and from $\c F\ci\models G$ we get $\c F\models G\ci$. We also have $\c F\models U_2$.
\textit{If we can show that $\c M$ strongly verifies $H\ci$}, then  putting $L' = K4U_2G\ci H\ci$ we get $\c M\models L'$, so 
$\neg E\ci\notin L'$. Hence by Lemma \ref{LnotE}, $L'$ is a Kripke incomplete extension of $K4U_2$.

So far we have not needed to know what $E,G,H$ are, given the results about them in \cite{+Fine74c}. But
Lemma  \ref{Mcirc} does not appear to give access to a proof that $\c M$ strongly verifies $H\ci$, since a substitution instance of $H\ci$ need not be of the form $\phi\ci$ where $\phi$ is  a substitution instance of $H$. So we must revisit the proof in \cite{+Fine74c} that $\c M\ci$ strongly verifies $H$ and adapt it to show that $\c M$ strongly verifies $H\ci$.
The formula $H$ is
$$
\neg(s\land\bo(s\to\di(\neg s\land t\land\di(\neg s\land\neg t\land\di s)))),
$$
where $s$ and $t$ are atoms, so  $H\ci$ is
$$
\neg(s\land\bo\ci(s\to\di\ci(\neg s\land t\land\di\ci(\neg s\land\neg t\land\di\ci s)))).
$$
To show that this is strongly verified by our model $\c M$, suppose otherwise for the sake of contradiction. Then there are formulas $\phi$ and $\psi$ and some $w\in W$ with 
$$
\c M,w \models  \phi\land\bo\ci(\phi\to\di\ci(\neg \phi\land \psi\land\di\ci(\neg \phi\land\neg \psi\land\di\ci \phi))).
$$
Hence in $\c M$ there are points $x,y,z$ with $wR\ci xR\ci yR\ci z$, such that $w\models\phi$, $x\models \neg\phi\land\psi$,
$y\models \neg\phi\land\neg\psi$ and $z\models\phi$. Then $w\ne x\ne y\ne z$, so $wRxRyR z$. Since $z\models \phi$, we can use $H\ci$ again to repeat the argument to obtain further points $zRx'Ry'Rz'\models\phi$ and so on ad infinitum,  generating an infinite ascending $R$-chain from $w$. This means we must have $w=d_m$ for some $m$, with the chain consisting of $d$-points, and
the formulas  $\phi$, $\neg\phi\land\psi$, $\neg \phi\land\neg \psi$ being true cofinally along the chain.

The rest of the argument to a contradiction is as in \cite{+Fine74c}.
 We repeat the details. Let $\bo\chi_1,\dots\bo\chi_k$ be all the formulas from which $\phi$ and $\psi$ can be constructed by Boolean connectives.
Since the $R(d_i)$'s are decreasing, if $d_i\models\bo\chi_l$, then  $d_j\models\bo\chi_l$ for all $j>i$.
Hence there exists some $n\geq m$ such that the truth values of $\bo\chi_1,\dots,\bo\chi_k$ are fixed from $d_n$ on, i.e.\ 
$d_i\models \bo\chi_l$ iff $d_n\models \bo\chi_l$, for all $i\geq n$ and $l\leq k$.

We have not yet had to say anything about the valuation $V$. In fact there are two atoms $p_0$ and $p_1$ with 
$V(p_0)=\{d_{i}:i\ \text{is even}\}$ and $V(p_1)=\{d_{i}:i \ \text{is odd}\}$, while no other atoms are true at any  $d$-point. So if $i,j\geq n$ have the same parity (even/odd), then $d_i$ and $d_j$ agree on the truth values of all atoms and the formulas  $\bo\chi_1,\dots\bo\chi_k$, so they agree on $\phi$ and $\psi$ and all their Boolean combinations.

But there exist three $d$-points beyond $d_n$ at which the formulas $\phi$, $\neg\phi\land\psi$, and $\neg \phi\land\neg \psi$ successively are true. Then at least two of these points must have the same parity of index, hence must agree on the truth-values of the formulas, which is impossible. That is the contradiction confirming that $\c M$ strongly verifies $H\ci$, finishing the proof that  $K4U_2G\ci H\ci$ is Kripke incomplete.

\medskip

The following is not settled by this argument:

\begin{problem}\label{prob:K4U1 ext}
Are all normal monomodal logics extending $K4U_1$
Kripke complete?
\end{problem}

\paragraph{Conclusion}
On the one hand then, 
compared with Fine's $K4I_n$, 
the logics $KU_n$ are rather weak (or small).
They do not contain the transitivity axiom 4,
there are continuum-many normal modal logics between $K4U_n$ and $K4I_n$,
and some normal extensions of $KTU_1 $ and of $K4U_2$
are Kripke incomplete.
This is good news inasmuch as 
it indicates that theorem~\ref{thm:can} below
covers a wide range of logics.
On the other hand, 
the frame conditions imposed by the $U_n$ are substantial,
as it is easy to find frames
such as the lawn-rake frame $\c F_\omega$
that do not validate any $KU_n$.

\section{Canonicity for logics above $KU_n^\M$}\label{sec:main canon}

We are going to prove (in theorem~\ref{thm:can}) that all $\omega$-canonical 
normal modal $\M$-logics of finite \ach\ width (see definition~\ref{def:Un}) are totally canonical.
By the inclusions shown in figure~\ref{fig:logics}, 
this holds for normal extensions of $K5_2$, $K5$, and $K4I_n$ as well.
In terms of varieties, we will prove that 
every variety that validates some $U_n$ is \nice.
The proof extends the one in \S\ref{ss:K52 canon}.

\subsection{Infinite \ach\ sets in subsets of frames}

We start by extending lemma~\ref{lem:valid52} to this setting.
For any $n$, validity of the axioms $U_n^\M$ in a frame $\c F$
guarantees that no set $R^\c F_\di(x)$ (for $x\in\c F$ and $\di\in\M$)
has an infinite \ach\ subset.
But does this extend to sets of the form
$R_s(x)=\{y\in\c F:\c F\models R_sxy\}$ for arbitrary $s\in{}^{<\omega}\M$?
(See definition~\ref{def:etc} for $R_s$.)

\begin{example}\rm

There do exist  frames $\c F$
(monomodal ones, with $\M=\{\di\}$)
in which
 no $R^\c F_\di(x)$ (for any $x\in\c F$) has
 an infinite \ach\ subset
but 
$R_s(x)=\{y\in\c F:\c F\models R_sxy\}$ does, for some $x\in\c F$,
where $s=(\emptyset\hat\ \di)\hat\ \di$ is the sequence of two $\di$s.

Let $\c F=(\{a\}\cup(\omega\times2),R)$,
where $a\notin\omega\times2$ and
$R=\{(a,(i,0)),((i,0),(j,1)), ((i,1),(i,1)):j\leq i<\omega\}$.
Then $R(a)=\omega\times\{0\}$, which does not contain
even a two-element \ach\ set,
since 
$\{R((i,0)):i<\omega\}$ forms a chain under inclusion.
And $R(x)$ is finite for all  $x\in\c F\setminus\{a\}$, so certainly has no infinite \ach\ subsets.
But $R_s(a)=\omega\times\{1\}$ is infinite and \ach,
since 
if $i<j<\omega$ then
$R((i,1))=\{(i,1)\}$
and $R((j,1))=\{(j,1)\}$ are $\subseteq$-incomparable.

\end{example}

Inspired by lemma~\ref{lem:valid52}, we show in the next lemma that
this cannot happen for frames validating some $U_n^\M$.
It will be used in proposition~\ref{prop:cof}.
A more effective version could be proved using Ramsey numbers.

\begin{lemma}\label{lem:ramsey}
Let $0<n<\omega$ and let $\c F$ be a frame validating $U^\M_n$.
Let
$s\in{}^{<\omega}\M$
and  $x\in\c F$,
and write $R_s(x)$ for the set
$\{y\in\c F:\c F\models R_sxy\}$.
Then every \ach\ subset of $R_s(x)$ is finite.
\end{lemma}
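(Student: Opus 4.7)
The plan is to induct on $|s|$, using Ramsey's theorem at the key step. For the base cases: $|s|=0$ gives $R_s(x)=\{x\}$, so any \ach\ subset has at most one element; and $|s|=1$, say $s=\di$ for some $\di\in\M$, reduces to the \fo\ correspondent of $U_n^\M$ recorded in \S\ref{sss:corresps}, which says precisely that every $\bdi$-\ach\ subset of $R_\di(x)$ has at most $n$ elements.

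For the inductive step, let $|s|\geq 2$ and write $s=t\hat{\ }\di$, so $|t|\geq 1$. Fix $\bdi\in\M$ and a $\bdi$-\ach\ set $Y\subseteq R_s(x)$; assume for contradiction that $Y$ is infinite. For each $y\in Y$ choose $\pi(y)\in R_t(x)$ with $R_\di\pi(y)y$. Each fibre $\pi^{-1}(z)\cap Y$ is a $\bdi$-\ach\ subset of $R_\di(z)$, so by the $|s|=1$ case applied at $z$ it has size at most $n$. Hence $\pi[Y]$ must be infinite, and after thinning $Y$ by taking one representative per fibre (which preserves $\bdi$-\ach-ness) we may assume $\pi|_Y$ is injective. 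Enumerate this thinned set as $Y=\{y_i:i<\omega\}$ and set $z_i=\pi(y_i)\in R_t(x)$.

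Now apply Ramsey's theorem with a four-colouring of the pairs $\{i,j\}$ with $i<j<\omega$: colour by whether the pair $(R_\di(z_i),R_\di(z_j))$ satisfies equality, proper ascending inclusion, proper descending inclusion, or $\subseteq$-incomparability. Let $I\subseteq\omega$ be an infinite monochromatic set. In the incomparable case, $\{z_i:i\in I\}$ is an infinite $\di$-\ach\ subset of $R_t(x)$, directly contradicting the induction hypothesis applied to $t$. In each of the three remaining cases, for any $n+1$ indices $i_0<\cdots<i_n$ in $I$ the corresponding points $y_{i_0},\ldots,y_{i_n}$ all lie in a common set $R_\di(z)$ --- take $z=z_{i_0}$ in the equality and strictly descending cases, and $z=z_{i_n}$ in the strictly ascending case --- yielding an $(n+1)$-element $\bdi$-\ach\ subset of $R_\di(z)$, contradicting the base-case bound from $U_n^\M$.

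The main obstacle is that the hypothesis `$Y$ is $\bdi$-\ach' gives no direct information about the geometry of $\pi[Y]\subseteq R_t(x)$, so a naive appeal to the induction hypothesis on $t$ is unavailable. Ramsey's theorem is what closes the gap: it organises $\pi[Y]$ so that either $\pi[Y]$ itself contains an infinite \ach\ subset (contradicting IH), or large sub-collections of $y_i$'s are forced into a common $R_\di(z)$, where the $|s|=1$ bound from $U_n^\M$ delivers a contradiction.
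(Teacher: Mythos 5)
Your proof is correct and is essentially the paper's own argument: induction on the length of $s$, a choice of intermediate points $z_i\in R_t(x)$ with $R_\di z_iy_i$, Ramsey's theorem applied to the $\subseteq$-comparability pattern of the sets $R_\di(z_i)$, the incomparable colour refuted by the induction hypothesis on $t$, and the comparable colours forcing $n+1$ of the $y_i$ into a single $R_\di(z)$ where the correspondent of $U_n^\M$ applies. Your extra touches (four colours instead of three, the fibre-counting/thinning step to make $\pi$ injective) are harmless but not needed, since in the incomparable case the relevant $z_i$ are automatically distinct.
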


\begin{proof}
In the proof, we write $R^\c F_\di(y)$ as just $R_\di(y)$, for $y\in\c F$ and $\di\in\M$.
The proof is by induction on the finite ordinal $\dom(s)$.
For $s=\emptyset$, the lemma holds trivially, since $R_\emptyset(x)=\{x\}$ is itself finite.
Assume inductively that the lemma holds for $s\in{}^{<\omega}\M$ (for all $x\in\c F$),
and let $\di\in\M$ be given. We prove 
that every \ach\ subset of  $R_{s\hat\ \di}(x)$ is finite.

By lemma~\ref{lem:pif},
it is enough to show the following.
Let $y_0,y_1,\ldots\in R_{s\hat\ \di}(x)$,
and let $\bdi\in\M$ be given.
Then $R_\bdi(y_i)\subseteq R_\bdi(y_j)$
for some distinct $i,j<\omega$.

Choose $z_0,z_1,\ldots\in R_{s}(x)$
such that $y_i\in R_\di(z_i)$ for each $i<\omega$.
By Ramsey's theorem \cite{Ram30}, there is infinite 
$K=\{k_0,k_1,\ldots\}\subseteq\omega$,
where $k_0<k_1<\cdots$,
such that
\begin{enumerate}
\item $R_\di(z_k)\subseteq R_\di(z_l)$ for each $k<l$ in $K$, or

\item $R_\di(z_k)\supset R_\di(z_l)$ for each $k<l$  in $K$, or

\item $R_\di(z_k)\not\subseteq R_\di(z_l)\not\subseteq R_\di(z_k)$ for each $k<l$  in $K$.
\end{enumerate}
The last option is impossible, since
if it held, $(z_{k_i}:i<\omega)$ would be an infinite
achronal sequence in $R_s(x)$, 
which by lemma~\ref{lem:pif} would contradict the inductive hypothesis for $s$.
If option~1 holds, then
$y_{k_0},\ldots,y_{k_n}\in R_\di(z_{k_n})$.
If option~2 holds, then
$y_{k_0},\ldots,y_{k_n}\in R_\di(z_{k_0})$.
Either way,
$y_{k_0},\ldots,y_{k_n}\in R_\di(t)$ for some $t\in\c F$.
Since $U_n^\M$ is valid in $\c F$ at  $t$,
by~\eqref{e:fo cor} in \S\ref{sss:corresps}
 there must be distinct $i,j\leq n$
with  $R_\bdi(y_{k_i})\subseteq R_\bdi(y_{k_j})$, as required.
This completes the induction and proves the lemma.
\end{proof}

\subsection{Underlying}

We now return to the variety $V$ of \S\ref{ss:var V}.
From now on, we assume that \emph{the logic of $V$ is 
of finite \ach\ width --- that is, above some $KU_n^\M$.}
Explicitly, for some integer $n\geq1$, the translations 
$\tau_\varphi$ (given in \S\ref{ss:algs})
of
the axioms  $\varphi\in U_n^\M$ to $\M$-BAO terms 
have value 1 in every algebra in $V$ under all assignments to their variables.
In terms of \S\ref{ss:algs},
we could just say that $V$ validates some $KU^\M_n$.

We will show in theorem~\ref{thm:can} that $V$ is \nice.
We make free use of material in \S\S\ref{sec:defs}--\ref{sec:setup}.

\begin{definition}
For $\di\in\M$,
we let $\o R_\di xy$
stand for the $\c L$-formula
$\forall z(R_\di yz\to R_\di xz)$.
\end{definition}
This recalls  $\o R$ in \S\ref{ss:S4In=S4Un}.
For $t,u\in\c B_+$,
we have $\gc B\models \o R_\di tu$
iff $R_\di^{\c B_+}(u)\subseteq R_\di^{\c B_+}(t)$:
the $\di$-future of $u$ in the frame $\c B_+$ is contained in that of $t$.
Then a subset of $\c B_+$ is $\di$-\ach\
iff it is an antichain with respect to $\o R_\di$.

\begin{definition}
\begin{enumerate}

\item Let $S,T\subseteq \c B_+$. We say that
\emph{$T$ underlies $S$}
if for each $s\in S$
and $\di\in \M$, there is $t\in T$ with
$\gc B\models\o R_\di st$.
This is a kind of density property.

\item As is standard, a subset $D\subseteq\gc B$
 is said to be \emph{$\c L(\gc A)$-definable}
if  $D=\{d\in\gc B:\gc B\models\gamma(d)\}$
for some $\c L(\gc A)$-formula $\gamma(x)$.
\end{enumerate}
\end{definition}

The following is the key technical proposition
in the proof.  It will be used in lemma~\ref{lem:truth}.
The embedding $f:\c A_+\to\c B_+$ is as in definition~\ref{def:f}.

\begin{proposition}\label{prop:cof}
Let $\c F$ be an inner subframe of $\c A_+$ generated by an element of $\wh W$,
and let $D$
be an $\c L(\gc A)$-definable subset of $\c B_+$.
Then $W\cap D$ underlies
$f(\c F)\cap D$.
\end{proposition}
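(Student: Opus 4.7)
The plan is to induct on the length $k$ of a sequence $r\in{}^{<\omega}\M$ with $\c A_+\models R_r\hat{w_0}\mu$ and $s=f(\mu)$. In the base case $k=0$, $s=f(\hat{w_0})=w_0\in W\cap D$ by corollary~\ref{cor:f emb}, so $t:=s$ works.

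For the inductive step, write $r$ as $r'$ followed by some $\bdi\in\M$ and pick $\nu\in\c F$ with $\c A_+\models R_{r'}\hat{w_0}\nu\wedge R_\bdi\nu\mu$; set $v=f(\nu)\in f(\c F)$, so $\gc B\models R_\bdi vs$ and $v$ is reached via the shorter sequence $r'$. Since $\c B\in V$ validates the Sahlqvist axiom $U_n^\M$, so does its canonical frame $\c B_+$, so among any $n+1$ points of $R_\bdi^{\c B_+}(v)$ two must have $\subseteq$-comparable $\di$-futures. The strategy is to select $y_1,\ldots,y_n\in W\cap D\cap R_\bdi^{\c B_+}(v)$ that are pairwise $\di$-\ach\ and satisfy $R_\di^{\c B_+}(s)\not\subseteq R_\di^{\c B_+}(y_i)$ for each~$i$. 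Applying $U_n^\M$ at~$v$ to $s,y_1,\ldots,y_n\in R_\bdi(v)$ then forces a $\subseteq$-comparable pair which, by pairwise incomparability among the $y_i$'s, must involve $s$ and some $y_i$, and which by the $\not\supseteq R_\di(s)$ condition can only take the form $R_\di(y_i)\subseteq R_\di(s)$; take $t:=y_i$.

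The $y_i$ are constructed one at a time, with the already-chosen points forming the parameter set $K=\{y_1,\ldots,y_{i-1}\}$. At step~$i$, applying the inductive hypothesis to~$v$ with modality~$\bdi$ and an $\c L(\gc A)$-definable set $D_i$ encoding ``$x$ has a $\bdi$-successor in $D$ whose $\di$-future is incomparable with each $R_\di(y_j)$, $j<i$'' yields $v_i'\in W\cap D_i$ with $R_\bdi(v_i')\subseteq R_\bdi(v)$; a further Tarski--Vaught application in $\gc A\preceq\gc B$ to the existential witness inside $D_i$ then produces $y_i\in W\cap D$ with $R_\bdi(v_i',y_i)$, hence $y_i\in R_\bdi(v)$ satisfying the required incomparabilities. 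Verifying $v\in D_i$ uses $s$ itself as a witness.

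The principal obstacle is engineering the constraint $R_\di(s)\not\subseteq R_\di(y_i)$: since $s\notin\gc A$ in general, this is not $\c L(\gc A)$-expressible and so cannot simply be added to $D_i$. To handle it, we work in the expanded language~$\c L'$, in which $s=\u\mu^{\gc B'}$ is named by the algebra-sort predicate~$\u\mu$; the condition becomes the $\c L'$-formula $\exists a(\bo a\inn y_i\wedge\neg\u\mu(\bo a))$. Proposition~\ref{prop:better still} and corollary~\ref{cor:A case} are used to carry the relevant $\exists\bar x\forall\bar y$-simple statements between $\gc A'$ and $\gc B'$, supplementing the IH so that the iterative construction yields $y_i$'s satisfying all required constraints.
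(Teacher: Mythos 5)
Your overall skeleton (induction on the length of the generating path, a finite parameter set $K$ of already-chosen points of $W$, and an application of $U_n^\M$ at $v$ to the $(n+1)$-tuple $f(\mu),y_1,\ldots,y_n$) is a plausible first attempt, and the base case and the reduction to the predecessor $\nu$ are fine. But the step you yourself flag as ``the principal obstacle'' is not an implementation detail --- it is the entire content of the proposition --- and your proposed fix does not work. You need each $y_i\in W$ to satisfy $R_\di^{\c B_+}(f(\mu))\not\subseteq R_\di^{\c B_+}(y_i)$, a condition that mentions $f(\mu)\notin\gc A$ and is therefore expressible only in $\c L'$ via $\u\mu$. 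To produce such a $y_i$ \emph{inside $W$} you would need a Tarski--Vaught step for $\c L'$-formulas, i.e.\ a transfer of existential $\c L'$-statements from $\gc B'$ back to $\gc A'$. Proposition~\ref{prop:better still}(2) and corollary~\ref{cor:A case} give only the opposite direction ($\exists\forall$ simple sentences pass from $\gc A'$ to $\gc B'$; universally quantified simple sentences pass both ways), and example~\ref{eg:conv fails} shows that the direction you need genuinely fails. Moreover, the witness you need may not exist even in $\c B_+$: nothing prevents every point of $R_\bdi^{\c B_+}(v)\cap D$ from having $\di$-future containing $R_\di^{\c B_+}(f(\mu))$, in which case your construction stalls already at $y_1$ although the proposition remains true.

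The paper's proof is organised precisely to sidestep this. It takes a \emph{maximal} $K\subseteq W\cap D'$ with $K\cup\{f(\mu)\}$ $\di$-\ach\ (finite by lemma~\ref{lem:ramsey}), so that the $\c L(\gc A)$-definable set $E$ of points compatible with $K$ contains $f(\mu)$ and, by maximality, consists only of points $\subseteq$-comparable with $f(\mu)$. It then assumes for contradiction that every $w\in W\cap E$ satisfies $R_\di(f(\mu))\subseteq R_\di(w)$, packages this as a single \emph{universally quantified} simple $\c L'$-sentence, transfers it from $\gc A'$ to $\gc B'$ by corollary~\ref{cor:A case} so that it holds for \emph{all} of $E$, and finally applies Tarski--Vaught to the pure $\c L(\gc A)$-statement ``some $x$ in $E$ has $R_\di(x)\subseteq R_\di(y)$ for all $y\in E$'' (witnessed by $f(\mu)$) to extract $w\in W\cap E$ with $R_\di(w)\subseteq R_\di(f(\mu))$ --- the desired contradiction. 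To salvage your argument you would have to replace the direct search for incomparable witnesses by some such maximality-plus-contradiction device in which the only statements crossing between $\gc A'$ and $\gc B'$ are universally quantified simple ones or plain $\c L(\gc A)$-formulas.
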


\begin{proof}
Choose $w_0\in W$ such that $\c F=\c A_+(\wh{w_0})$. 
Let
$D=\{d\in\c B_+:\gc B\models\gamma(d)\}\subseteq\c B_+$,
where $\gamma(x)$ is  some arbitrary $\c L(\gc A)$-formula with $x$ of point sort.
Let $\mu\in\c F$ with  $f(\mu)\in D$,
and let $\di\in\M$.
We show that  there is $w\in W\cap D$ with
$\gc B\models\o R_\di(f(\mu),w)$.

Clearly, if $f(\mu)\in W$ then we can take $w=f(\mu)$.
So we can suppose that
\begin{equation}\label{e:loc mu}
f(\mu)\in D\setminus W.
\end{equation}

There is no loss of generality if we 
replace $D$ by a `nicer' $\c L(\gc A)$-definable subset $E$ of $D$
still containing $f(\mu)$.
We will take advantage of this.
First,
by definition of $\c F$ (definition~\ref{def:etc}),
we can choose $s\in{}^{<\omega}\M$
with
\begin{equation}\label{e:loc mu 2}
\c A_+\models R_s (\wh{w_0},\mu).
\end{equation}
Moving from $\c A_+$ to $\gc B$,
let
\begin{equation}\label{e:D}
D'=\{d\in D: \gc B\models R_s  {w_0}d\}.
\end{equation} 
Now let $K\subseteq W\cap D'$ be maximal such that
$K\cup\{f(\mu)\}$ is $\di$-\ach\ in the frame $\c B_+$.
%
Such a $K$ exists by Zorn's lemma; it may be empty.
Since $\c B\in V$ and the axioms in $U^\M_n$ are Sahlqvist,
it follows that $\c B_+$ validates $U^\M_n$,
so by~\eqref{e:D} and lemma~\ref{lem:ramsey},
there are no infinite \ach\ subsets of $D'$.
Hence, $K$ is finite.
So if we let
\begin{equation}\label{e:E}
E=\{x\in D'\setminus K: K\cup\{x\}
\mbox{ is $\di$-\ach\ in }\c B_+
\},
\end{equation}
then $E$ is $\c L(\gc A)$-definable,
by the $\c L(\gc A)$-formula
\begin{equation*}
\varepsilon(x)=\gamma(x)\wedge R_s \u{w_0}x\wedge
\neg\bigvee_{k\in K}\o R_\di x\u k\vee\o R_\di \u kx.
\end{equation*}
(We do not need to add a disjunct $x=\u k$ at the end here.)

By~\eqref{e:E} and~\eqref{e:D},
 $E\subseteq D'\subseteq D$.
Also, $\gc B\models R_s(w_0,f(\mu))$
by \eqref{e:loc mu 2} and  corollary~\ref{cor:f emb};
so by~\eqref{e:loc mu} and~\eqref{e:D}, $f(\mu)\in D'$.
By~\eqref{e:loc mu} again,
$f(\mu)\notin W\supseteq K$.
So by~\eqref{e:E}
and the definition of~$K$,
\begin{equation}\label{e:fuinE}
f(\mu)\in E.
\end{equation}

\medskip

We now proceed to find $w\in W\cap E$ with
$\gc B\models\o R_\di(f(\mu),w)$ 
--- that is, $R^\gc B_\di(w)\subseteq R^\gc B_\di(f(\mu))$.
Assume for contradiction that $(\dag)$
there is no such $w$.


\Claim1
$\gc B\models\o R_\di(w,f(\mu))$
for every $w\in W\cap E$.
 
\pfclaim
Let $w\in W\cap E$.
Then $w\notin K$ by  \eqref{e:E},
so $K\subset K\cup\{w\}\subseteq W\cap D'$.
The maximality of $K$ now implies that
$K\cup\{w,f(\mu)\}$ is not $\di$-\ach\ in $\c B_+$.
That is, it is not an $\o R_\di$-antichain in $\gc B$.
Since $K\cup\{w\}$ and $K\cup\{f(\mu)\}$  are clearly $\di$-\ach, 
so are such antichains,
we must have
$\gc B\models\o R_\di(w,f(\mu))$
or
 $\gc B\models\o R_\di(f(\mu),w)$.
Our assumption $(\dag)$ rules out the second of these,
so indeed,
$\gc B\models\o R_\di(w,f(\mu))$.
Since $w$ was arbitrary, this proves the claim.

\medskip

We now extend this to the whole of $E$.

\penalty-100

\Claim2
$\gc B\models\o R_\di(e,f(\mu))$ for every $e\in  E$.

\pfclaim
First, it is an exercise in canonical frames to show that for every $t,u\in\c B_+$,
\begin{equation}\label{e:di R}
\gc B\models \o R_\di tu
\leftrightarrow \forall x(\di x\inn u\to\di x\inn t).
\end{equation}

Now fix any $w\in W\cap E$.
Claim~1 says that
$\gc B\models\o R_\di(w,f(\mu))$.
So by~\eqref{e:di R},
we have $\gc B\models\forall x(\di x\inn f(\mu)\to \di x\inn w)$.
By definition of  $f$ (see~\eqref{e:def f} in \S\ref{ss:def f}), 
this says that
\begin{equation*}\label{e:w thing}
\gc B'\models\forall x(\u{\mu}(\di x)\to \di x\inn\u w).
\end{equation*}
This  is a universally-quantified simple formula,
so by corollary~\ref{cor:A case},
$\gc A'\models\forall x(\u{\mu}(\di x)\to \di x\inn\u w)$ as well.
Now this holds for all $w\in W\cap E=\gc A\cap E$.
And since $\gc A\preceq\gc B$,
for $w\in W$
we have $w\in E$ iff $\gc B\models\varepsilon(w)$, iff $\gc A\models\varepsilon(w)$.
So all in all,
\begin{equation*}\label{e:xi}
\gc A'\models\forall xy(\underbrace{\varepsilon(y)\wedge\u{\mu}(\di x)\to \di x\inn y}_\theta).
\end{equation*}
Here, $\theta$  is a simple $\c L'$-formula.
So by corollary~\ref{cor:A case} again,
$\gc B'\models\forall xy(\varepsilon(y)\wedge\u{\mu}(\di x)\to \di x\inn y)$
---
this is a key step in the proof.
By~\eqref{e:def f} again, 
$\gc B\models\forall xy(\varepsilon(y)\wedge\di x\inn f(\mu)\to\di x\inn y)$,
so by~\eqref{e:di R},
\begin{equation}\label{e:nearly}
\gc B\models\forall y(\varepsilon(y)\to\o R_\di(y,f(\mu))).
\end{equation}

Finally, take any $e\in E$.
Then $\gc B\models\varepsilon(e)$, so~\eqref{e:nearly} yields
$\gc B\models\o R_\di(e,f(\mu))$,
proving the claim.

\bigskip

We said in~\eqref{e:fuinE} that $f(\mu)\in E$,
and combined with
claim~2, this gives
\begin{equation}\label{e:epsilon}
\gc B\models\varepsilon(f(\mu))\wedge\forall y(\varepsilon(y)
\to\o R_\di(y,f(\mu))).
\end{equation}
Thus, $f(\mu)$ witnesses
 $\gc B\models\exists x[\varepsilon(x)\wedge
 \forall y(\varepsilon(y)\to \o R_\di yx)]$.
By the Tarski--Vaught criterion (\S\ref{ss:A<B}),
there is $w\in W$
with 
\begin{equation}\label{e:ten}
\gc B\models\varepsilon(w)\wedge
\forall y(\varepsilon(y)\to\o R_\di yw).
\end{equation}
Now, taking $y=f(\mu)$ in~\eqref{e:ten},
and recalling from~\eqref{e:epsilon} that $\gc B\models\varepsilon(f(\mu))$,
we get
$\gc B\models\o R_\di(f(\mu),w)$.
Since \eqref{e:ten} implies $w\in W\cap E$, this contradicts
our assumption $(\dag)$ that there is no such $w$,
and proves the proposition.
\end{proof}

\subsection{Canonicity}\label{ss:canon}

We now prove that the Truth Lemma
of definition~\ref{def:truth lem}  holds in $V$.
Define $g$, $\c F$, $\c M$ as in \S\ref{ss:strategy}.
Recall the standard translation $\psi^x$
from \S\ref{ss:strategy}.
Recalling~\eqref{e:stdtr} from that section, for each $\M$-formula $\psi$
and $t\in\c B_+$ we have
\begin{equation}\label{e:stdtr 2}
(\c B_+,g),t\models\psi \iff \gc B\models\psi^x(t).
\end{equation}

\begin{lemma}[Truth Lemma]\label{lem:truth}
For each $m\in\c M$ and modal $\M$-formula $\psi$,
we have $\c M,m\models\psi$ iff $(\c B_+,g),m\models\psi$.

\end{lemma}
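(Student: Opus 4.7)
The proof will proceed by induction on the modal $\M$-formula $\psi$. The base case ($\psi$ atomic or $\top$) and the Boolean induction steps are immediate, using only that $\c M$ is a submodel of $(\c B_+,g)$ so that $g'(q)=g(q)\cap\c M$ by construction. The only substantive case is $\psi=\di\chi$ for some $\di\in\M$, assuming the lemma for $\chi$. The forward direction is routine: any $R_\di^{\c M}$-witness to $\di\chi$ is also an $R_\di^{\gc B}$-witness, and the IH transfers truth of $\chi$.

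The work is in the converse direction, and here I would deploy proposition~\ref{prop:cof}. Suppose $(\c B_+,g),m\models\di\chi$ with $m\in\c M=f(\c F)$. Put
\[
D=\{d\in\c B_+:\gc B\models(\di\chi)^x(d)\}.
\]
Since $(\di\chi)^x$ is an $\c L$-formula (with no parameters), $D$ is $\c L(\gc A)$-definable; and $m\in D\cap f(\c F)$ by \eqref{e:stdtr 2}. Proposition~\ref{prop:cof} then supplies some $w\in W\cap D$ with $\gc B\models\o R_\di(m,w)$, that is,
$R_\di^{\gc B}(w)\subseteq R_\di^{\gc B}(m)$.
Membership $w\in D$ unpacks as $\gc B\models\exists y(R_\di\u wy\wedge\chi^x(y))$; since $w\in W$ is named in $\c L(\gc A)$, the Tarski--Vaught criterion (\S\ref{ss:A<B}) yields $w'\in W$ with $\gc B\models R_\di\u w\u{w'}\wedge\chi^x(w')$. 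The containment of futures then forces $R_\di^{\gc B}mw'$.

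The remaining step is to verify $w'\in\c M$. Writing $m=f(\mu)$ with $\mu\in\c F$, we have $f(\hat{w'})=w'$ by corollary~\ref{cor:f emb}, and $f$ is a frame embedding, so $R_\di^{\c A_+}\mu\,\hat{w'}$. Because $\c F$ is an inner subframe of $\c A_+$ containing $\mu$, we conclude $\hat{w'}\in\c F$ and hence $w'=f(\hat{w'})\in\c M$. Finally, $\gc B\models\chi^x(w')$ and~\eqref{e:stdtr 2} give $(\c B_+,g),w'\models\chi$; the IH gives $\c M,w'\models\chi$; and $R_\di^{\gc B}mw'$ with $m,w'\in\c M$ gives $R_\di^{\c M}mw'$, so $\c M,m\models\di\chi$.

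The main obstacle is the coordination of two elementarity steps — proposition~\ref{prop:cof} to secure a named point $w$ whose $\di$-future is bounded by that of $m$, and then Tarski--Vaught to produce a named $\chi$-witness $w'$ sitting inside that future — so that the resulting witness is simultaneously a successor of $m$ in $\gc B$ and lies inside $\c M=f(\c F)$. Everything else is bookkeeping supported by the inner-subframe property of $\c F$ and the fact that $f$ is a frame embedding.
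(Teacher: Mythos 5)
Your proposal is correct and follows essentially the same route as the paper's own proof: the same set $D$ defined by $(\di\chi)^x$, the same appeal to proposition~\ref{prop:cof} to get $w\in W\cap D$ with $R_\di^{\gc B}(w)\subseteq R_\di^{\gc B}(m)$, the same Tarski--Vaught step to extract a named witness $w'\in W$, and the same use of corollary~\ref{cor:f emb} plus $R_\di$-closedness of $\c F$ to place $w'$ in $\c M$. No gaps.
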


\begin{proof}
By induction on $\psi$.
The main case is of course $\di\psi$
(where $\di\in \M$).
Assume the lemma inductively for $\psi$.
It is easy to check
(as in the proof of proposition~\ref{prop:tl for 52})
that if $\c M,m\models\di\psi$ then $(\c B_+,g),m\models\di\psi$ as well.
Conversely, suppose that $(\c B_+,g),m\models\di\psi$.
We will show that $\c M,m\models\di\psi$.

Let $D\subseteq\c B_+$ be the set defined in $\gc B $ by $(\di\psi)^x$.
By~\eqref{e:stdtr 2}, $m\in\c M\cap D$.
By proposition~\ref{prop:cof}, $W\cap D$ underlies $\c M\cap D$,
so
there is $w\in W\cap D$ with 
$\gc B\models\o R_\di mw$ --- that is,
\begin{equation}\label{e:cof wit}
R_\di^\gc B(w)\subseteq R_\di^\gc B(m).
\end{equation}

Since $w\in D$, we have $\gc B\models(\di\psi)^x(w)$,
which is to say that $\gc B\models\exists y(R_\di\u wy\wedge\psi^y)$.
So by the Tarski--Vaught criterion,
there is $w'\in W$
with
\begin{equation}\label{e:psi at w'}
\gc B \models R_\di ww'\wedge\psi^y(w').
\end{equation}
So by~\eqref{e:stdtr 2},
\begin{equation}\label{e:psi at w' 2}
(\c B_+,g),w'\models\psi.
\end{equation}
By~\eqref{e:psi at w'} and \eqref{e:cof wit}, 
$w'\in R_\di^\gc B(w)\subseteq R_\di^\gc B(m)$, so
\begin{equation}\label{e:underlies use}
\gc B\models R_\di mw'.
\end{equation}

It follows that $w'\in\c M$.
To see this, choose $\mu\in\c F$ with $m=f(\mu)\strut$.
By corollary~\ref{cor:f emb}, 
$f:\c A_+\to\c B_+$ is a frame embedding and $w'=f(\wh{w'})$.
So by~\eqref{e:underlies use},
$\c A_+\models R_\di \mu\wh{w'}\strut$.
By definition, $\c F$ is $R^{\c A_+}_\di$-closed in $\c A_+$, and since $\mu\in\c F$,
we get $\wh{w'}\in\c F$ and so $w'\in \c M$.

This means that
we can apply the inductive hypothesis to \eqref{e:psi at w' 2}, giving
$\c M,w'\models\psi$. 
Also, $\c M$ is a submodel of $(\c B_+,g)$,
so \eqref{e:underlies use} gives
$\c M\models R_\di mw'$.
We conclude that $\c M,m\models\di\psi$,
 completing the induction.
%
\end{proof}

\begin{theorem}\label{thm:can}
Assume that the logic of $V$ is of finite \ach\ width
--- above  $KU_n^\M$, for some $0<n<\omega$.
Then $V$ is \nice.
\end{theorem}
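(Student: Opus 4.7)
The plan is to invoke Theorem~\ref{thm:truthlemma} by checking that its hypothesis --- the Truth Lemma of definition~\ref{def:truth lem} --- holds in $V$. But this has already been carried out: Lemma~\ref{lem:truth} is precisely the statement of the Truth Lemma for the setup of \S\ref{ss:strategy}, given that $V$ validates some $U_n^\M$. So the proof is essentially one line: by Lemma~\ref{lem:truth} the Truth Lemma holds in $V$, hence by Theorem~\ref{thm:truthlemma} the variety $V$ is \nice.

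It is worth noting where the finite \ach\ width hypothesis is used, since this is the only place where it is really needed. In the proof of Proposition~\ref{prop:cof}, one builds a maximal $\di$-\ach\ set $K\subseteq W\cap D'$ containing $f(\mu)$ using Zorn's lemma. The validity of $U_n^\M$ in $\c B$, transferred to its canonical frame $\c B_+$ by Sahlqvist correspondence, combined with Lemma~\ref{lem:ramsey}, forces every \ach\ subset of $D'=\{d\in D:\gc B\models R_s w_0 d\}$ to be finite, hence $K$ is finite. This finiteness is what makes the $\c L(\gc A)$-formula $\varepsilon(x)$ defining $E$ well-formed: it contains a finite disjunction over $k\in K$. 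Without finite \ach\ width, one could not capture ``$K\cup\{x\}$ is $\di$-\ach'' by a first-order formula with parameters from $\gc A$, and the subsequent transfer via corollary~\ref{cor:A case} from $\gc A'$ to $\gc B'$ would break down.

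Consequently there is no additional obstacle to clear at this stage. The two-sorted elementary substructure framework of \S\ref{sec:setup}, the preservation result for simple formulas (proposition~\ref{prop:better still} and corollary~\ref{cor:A case}), the embedding $f:\c A_+\to\c B_+$ of corollary~\ref{cor:f emb}, the Ramsey-style bound of lemma~\ref{lem:ramsey}, and the underlying property of proposition~\ref{prop:cof} combine in lemma~\ref{lem:truth} to give the inductive step for $\di\psi$: given $(\c B_+,g),m\models\di\psi$ with $m\in\c M$, one uses proposition~\ref{prop:cof} with $D=\{d:\gc B\models(\di\psi)^x(d)\}$ to obtain $w\in W\cap D$ with $R_\di^\gc B(w)\subseteq R_\di^\gc B(m)$, then applies Tarski--Vaught to pick $w'\in W$ with $R_\di^\gc B ww'$ and $(\c B_+,g),w'\models\psi$, checks $w'\in\c M$ using that $\c F$ is inner and $f$ is an embedding, and closes the induction. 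Theorem~\ref{thm:can} follows at once, generalising theorem~\ref{thm:K52 canon} since (in the monomodal case) $KU_1\subseteq K5_2$.
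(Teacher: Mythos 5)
Your proposal is correct and is exactly the paper's own proof: Theorem~\ref{thm:can} is deduced immediately from Lemma~\ref{lem:truth} and Theorem~\ref{thm:truthlemma}. (One pedantic note on your side commentary: the set $K\subseteq W\cap D'$ in Proposition~\ref{prop:cof} is maximal such that $K\cup\{f(\mu)\}$ is $\di$-achronal, rather than itself containing $f(\mu)$, since $f(\mu)\notin W$ in the relevant case; this does not affect your argument.)
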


\begin{proof}
Immediate from lemma~\ref{lem:truth} and theorem~\ref{thm:truthlemma}.
\end{proof}

\subsection{Final remarks}\label{ss:final rmks}

Theorem~\ref{thm:can} expresses a dichotomy:
every variety of $\M$-BAOs (or normal modal logic) of finite \ach\ width
is either totally canonical or not even $\omega$-canonical.
It would be rather unsatisfactory
if all examples fell on the same side of the line.
But while $KU^\M_1,KU^\M_2,\ldots$ 
themselves are canonical,
 not all monomodal logics extending even
$K4I_1$ are canonical.
An example is S4.3Grz,
the smallest  monomodal logic
containing $p\to\di p$,
the axioms 4 and $I_1$ from~\eqref{e:In corr},
and 
$\mathrm{Grz}=\bo(\bo(p\to\bo p)\to p)\to p$
--- see~\cite[p.38]{+Fine74}.

Temporal logics and other logics with `converse diamonds'
are covered by theorem~\ref{thm:can}.
To get that $\di,\bdi\in\M$ are 
mutually converse in all canonical frames of algebras in $V$, we simply require that
$V$ validate the Sahlqvist axioms
$p\to\bbo\di p$ and $p\to\bo\bdi p$, regarded as $\M$-equations as usual.
For $\M=\{\di,\bdi\}$, any normal modal $\M$-logic containing these axioms,
axiom~4 for $\di$
from~\eqref{e:In corr}, and linearity axioms $I_1$ for $\di,\bdi$, also contains
 $U_1^\M$ and is a linear temporal logic covered by theorem~\ref{thm:can}.

From \S\ref{ss:M ctble} onwards, we took $\M$ to be countable.
This assumption was made only 
for presentational simplicity.
It had little to no effect on \S\S\ref{sec: K5}--\ref{sec:kr comp},
where $\M$ was usually a singleton $\{\di\}$ anyway.
But  we  could allow $\M$ to have uncountable cardinality $\kappa$ throughout the paper.
The only change required to the proofs is
that the elementary sub\str\ $\gc A\preceq\gc B$ 
in \S\ref{ss:A<B}
be taken of cardinality~$\leq\kappa$,
rather than countable.
The meaning of theorem~\ref{thm:can} becomes:
if the logic of $V$ is of finite \ach\ width
and $V$ is $\kappa$-canonical then it is totally canonical.
However, the theorem is formulated using \nice ness and is true as stated for all $\M$ of
any cardinality.

\section{Conclusion}\label{sec:conc}

We have shown that every variety of $\M$-BAOs validating $K5_2$,
or $KU_n$ for some finite $n\geq1$, is \nice.
Though we have made some progress on special cases,
Fine's problem from~\cite{Fine75} in its full generality remains open
and we still do not know whether every variety of $\M$-BAOs is \nice.
We also have no answer to 
problems~\ref{prob:f elem} and~\ref{prob:K4U1 ext},
as well as the following:


\begin{problem}
For finite $n\geq1$,
are all  normal monomodal extensions of 
Xu's logic $K4\mathsf{Wid}_n^*$
from \cite{Xu21}
\nice?
\end{problem}

\begin{problem}
Is every \emph{canonical} normal multimodal logic of 
finite \ach\ width the logic of an elementary class of frames?
\end{problem}

We have taken each $\di\in\M$ to be a \emph{unary} modal operator.
It may be worth investigating whether the methods of this paper
extend to polyadic  modal signatures.




\providecommand{\bysame}{\leavevmode\hbox to3em{\hrulefill}\thinspace}
\providecommand{\MR}{\relax\ifhmode\unskip\space\fi MR }
\providecommand{\MRhref}[2]{%
  \href{http://www.ams.org/mathscinet-getitem?mr=#1}{#2}
}
\providecommand{\href}[2]{#2}

\end{document}